\DeclareFontFamily{T1}{cbgreek}{}
\DeclareFontShape{T1}{cbgreek}{m}{n}{<-6>  grmn0500 <6-7> grmn0600 <7-8> grmn0700 <8-9> grmn0800 <9-10> grmn0900 <10-12> grmn1000 <12-17> grmn1200 <17-> grmn1728}{}
\DeclareSymbolFont{quadratics}{T1}{cbgreek}{m}{n}
\DeclareMathSymbol{\qoppa}{\mathord}{quadratics}{19}
\DeclareMathSymbol{\Qoppa}{\mathord}{quadratics}{21}
\newcommand{\id}{\mathrm{id}}
\newcommand{\Perf}{\mathrm{Perf}}
\newcommand{\QF}{\Qoppa}
\newcommand{\q}{\mathrm{q}}
\newcommand{\s}{\mathrm{s}}
\newcommand{\gs}{\mathrm{gs}}
\renewcommand{\t}{\mathrm{t}}
\renewcommand{\b}{\mathrm{b}}
\newcommand{\lto}{\longrightarrow}
\renewcommand{\L}{\mathrm{L}}
\newcommand{\KL}{\mathbb{L}}
\newcommand{\GW}{\mathrm{GW}}
\newcommand{\KGW}{\mathbb{GW}}
\newcommand{\ko}{\mathrm{ko}}
\newcommand{\ku}{\mathrm{ku}}
\newcommand{\KO}{\mathrm{KO}}
\newcommand{\MSO}{\mathrm{MSO}}
\newcommand{\cwedge}{{\scriptscriptstyle\wedge}}
\newcommand{\Z}{\mathbb{Z}}
\newcommand{\F}{\mathbb{F}}
\newcommand{\C}{\mathscr{C}}
\newcommand{\E}{\mathbb{E}}
\renewcommand{\S}{\mathbb{S}}
\newcommand{\map}{\mathrm{map}}
\newcommand{\op}{\mathrm{op}}
\newcommand{\Sp}{\mathrm{Sp}}
\newcommand{\Spc}{\mathrm{Spc}}
\newcommand{\Cat}{\mathrm{Cat}}
\DeclareMathOperator*{\colim}{colim}
\newtheorem{Thm}{Theorem}
\newtheorem{Cor}[Thm]{Corollary}
\newtheorem{Prop}[Thm]{Proposition}
\theoremstyle{definition}
\newtheorem{Example}[Thm]{Example}
\newtheorem*{Setup}{Setup}
\newtheorem*{Acknowledgements}{Acknowledgements}
\theoremstyle{remark}
\newtheorem*{claim*}{Claim}
\newtheorem{Rmk}[Thm]{Remark}
\theoremstyle{plain}
\newcounter{zaehler}
\newtheorem*{introcor*}{Corollary}
\title{Remarks on chromatically localised hermitian K-theory}
\author{Markus Land}
\address{Mathematisches Institut, Ludwig-Maximilians-Universit\"at M\"unchen, Theresienstra\ss e 39, 80333 M\"unchen, Germany}
\email{markus.land@math.lmu.de}
\thanks{The author was supported by the Danish National Research Foundation through the Copenhagen Centre for Geometry and Topology (DNRF151).}
\date{\today}
\begin{document}

\begin{abstract}
We describe chromatic localisations of genuine L-spectra of discrete rings and deduce that the purity property of $K(1)$-local $K$-theory of rings established by Bhatt--Clausen--Mathew also holds in Grothendieck--Witt theory. In addition, we collect some results on higher chromatic localisations of various L-theory spectra and their consequences for Grothendieck--Witt theory.
\end{abstract}

\maketitle

The recent times have shown many advances in chromatically localised algebraic $K$-theory, notably its relation to \'etale $K$-theory, redshift, and purity phenomena \cite{AKS,AKQ,BCM,CM,CMNN,HW,LMMT,Mathew,Yuan}. In addition, in the series of papers \cite{CDHI, CDHII, CDHIII, CDHIV} we have shown that Grothendieck--Witt theory, also known as hermitian K-theory, sits in a fibre sequence with algebraic $K$-theory and $\L$-theory. The purpose of this short note is to collect some results on chromatically localised L-theory and to use this to lift some of the purity and redshift results from algebraic $K$-theory to Grothendieck--Witt theory. 

\section{Discrete rings}

\begin{Setup}
Let $R$ be a discrete ring equipped with an invertible module with involution $M$ over $R$ in the sense of \cite[Definition 3.1.1 \& 3.1.4]{CDHI}. For instance, $R$ could be a commutative ring (or more generally a ring with an anti-involution) in which case we may choose $M=R$ with involution given by $\pm \id$. Recall for instance from \cite[Definition R.3]{CDHIII} that in this situation, we have the genuine Poincar\'e structures $\QF^{\geq m}_M$ on $\Perf(R)$, which come with comparison maps 
\[ \QF^\q_M \lto \dots \QF^{\geq m+1}_M \lto  \QF^{\geq m}_M \lto \QF^{\geq m-1}_M \lto \dots \lto \QF^\s_M.\]
where $\QF^\q_M = \QF^{\geq \infty}_M$ and $\QF^\s_M = \QF^{\geq -\infty}_M$. For any Poincar\'e structure $\QF$ on $\Perf(R)$, we will write $\L(R;\QF)$ for the L-spectrum of the Poincar\'e category $(\Perf(R),\QF)$, and we will further write $\L^\q(R;M)$ for $\L(R;\QF^\q_M)$ and $\L^\s(R;M)$ for $\L(R;\QF^\s_M)$. Also, we will use the notation $\L^\gs(R;M)$ for $\L(R;\QF^{\geq 0}_M)$ and simply $\L^\gs(R)$ if $R$ is commutative and $M= R$ with trivial involution; This version is called \emph{genuine symmetric} L-theory, since the Poincar\'e structure $\QF^{\geq 0}_M$ is the non-abelian derived structure associated to classical $M$-valued symmetric bilinear forms, see \cite[Section 4.2]{CDHI}. Using algebraic surgery, we have shown in \cite{CDHIII} that genuine symmetric L-theory recovers Ranicki's very first definition of (non-periodic) symmetric L-theory which appears in his work on the algebraic theory of surgery \cite{Ranicki-FoundationsI, Ranicki-FoundationsII}. Moreover, inspired by results of Galatius--Randal-Williams \cite{GRW1, GRW2}, Hebestreit--Steimle \cite{HS} use parametrised algebraic surgery to show that the connective cover of $\GW(R;\QF^{\geq 0}_M)$ is equivalent to the group completion of the symmetric monoidal category of unimodular symmetric $M$-valued bilinear forms, thereby relating the general Grothendieck--Witt spectra of \cite{CDHII} to classical hermitian $K$-theory. This result has been exploited in \cite{CDHIII}.
\newline

In this note, we work with the telescopic localisation functors $L_{T(n)}$, see for instance \cite[Section 2]{LMMT} for a summary of the basic properties we use here. However, all results in this note hold equally well for the chromatic localisations $L_{K(n)}$ in place of $L_{T(n)}$, where $K(n)$ is the Morava $K$-theory spectrum of height $n$. At height one, one has $L_{T(1)} = L_{K(1)}$ and we shall write $L_{K(1)}$ rather than $L_{T(1)}$, simply for sake of familiarity. For higher heights, the localisation map $\id \to L_{K(n)}$ factors through a canonical map $L_{T(n)} \to L_{K(n)}$, making $L_{T(n)}$ an a priori stronger localisation functor. The telescope conjecture, however, asserts that the map $L_{T(n)} \to L_{K(n)}$ is an equivalence, but is widely open in heights $\geq 2$. We refer to \cite{Barthel} for a survey and to \cite{CSY, CSY2} for general and categorical properties of the $T(n)$- and $K(n)$-local categories of spectra. Finally, we recall that all telescopic and chromatic localisation functors depend on an implicit prime $p$.
\end{Setup}

\begin{Thm}\label{Theorem1}
We have
\[ L_{K(1)}\L(R,\QF^{\geq m}_M) \simeq 
	\begin{cases} 0 & \text{ if } p = 2, \\
		\L^\s(R;M)^\cwedge_p & \text{ if } p \neq 2 
	\end{cases} \]
In addition, $L_{T(n)}\L(R;\QF^{\geq m}_M)=0$ for all $n\geq 2$ and all primes $p$.
\end{Thm}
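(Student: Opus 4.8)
The plan is to deduce everything from the chromatic behaviour of \emph{symmetric} $\L$-theory, and ultimately from that of $\L^\s(\Z)$, which is classical; in particular this shows the answer is independent of $m$.

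\emph{Reduction to the symmetric structure.} First I would show that the comparison $\L(R;\QF^{\geq m}_M)\to\L^\s(R;M)$ becomes an equivalence after $L_{T(n)}$ for every $n\geq 1$ and after $p$-completion for every odd prime $p$. All genuine Poincar\'e structures $\QF^{\geq m}_M$ share the same underlying symmetric bilinear part, and $\QF^\s_M=\colim_{m\to-\infty}\QF^{\geq m}_M$ as Poincar\'e structures on $\Perf(R)$; since $\L$-theory commutes with filtered colimits of Poincar\'e structures, $\L^\s(R;M)=\colim_{m\to-\infty}\L(R;\QF^{\geq m}_M)$. By the analysis of the genuine structures and of relative $\L$-theory in \cite{CDHI, CDHII, CDHIII} (essentially Ranicki's algebraic surgery package, cf.\ \cite{Ranicki-FoundationsI, Ranicki-FoundationsII}), the successive cofibres $\mathrm{cofib}\big(\L(R;\QF^{\geq m}_M)\to\L(R;\QF^{\geq m-1}_M)\big)$ are bounded generalised Eilenberg--MacLane spectra with homotopy killed by $2$ (computed by Tate cohomology of $C_2$), so the fibre $F_m=\mathrm{fib}\big(\L(R;\QF^{\geq m}_M)\to\L^\s(R;M)\big)$ is bounded above with $2$-power-torsion homotopy groups. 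Now any bounded-above spectrum with $2$-power-torsion homotopy is $T(n)$-acyclic for all $n\geq 1$: smashing with a finite type-$n$ complex preserves both properties, and since $v_n$ has strictly positive degree, inverting a $v_n$-self map pushes the (bounded above) homotopy off to $-\infty$, so the telescope vanishes; such a spectrum is also trivial after $p$-completion at any odd $p$. Hence $F_m$ is $T(n)$-acyclic for $n\geq 1$ and $(F_m)^\cwedge_p=0$ for odd $p$, which gives the claim.

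\emph{Symmetric $\L$-theory as a module, and the ring $\L^\s(\Z)$.} Since symmetric $\L$-theory is lax symmetric monoidal, $\L^\s(R;M)$ is a module over the $\E_\infty$-ring $\L^\s(\Z)=\L(\Z;\QF^\s_\Z)$, the action coming from the external product of symmetric Poincar\'e structures over $\Z\otimes R=R$. So it suffices to record the chromatic structure of $\L^\s(\Z)$, which is classical: away from $2$ the symmetric signature gives an equivalence of $\E_\infty$-rings $\L^\s(\Z)[\tfrac12]\simeq\KO[\tfrac12]$ (Sullivan, Taylor--Williams, Ranicki), so $\L^\s(\Z)[\tfrac12]$ is $E(1)$-local and $\L^\s(\Z)^\cwedge_p\simeq\KO^\cwedge_p$ is $K(1)$-local for odd $p$; at the prime $2$, $\L^\s(\Z)^\cwedge_2$ is a generalised Eilenberg--MacLane spectrum (the spectrum-level form of Sullivan's splitting of $\mathrm{G/TOP}$, due to Taylor--Williams), hence dissonant. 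In particular $\L^\s(\Z)$ is $T(n)$-acyclic for all $n\geq 2$ and all primes (at odd primes because $E(1)$-local, at $2$ because dissonant), and it is $K(1)$-acyclic at $p=2$.

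\emph{Conclusion.} As $T(n)\otimes-$ is symmetric monoidal, a module over a $T(n)$-acyclic ring is $T(n)$-acyclic; so for $n\geq 2$ the $\L^\s(\Z)$-module $\L^\s(R;M)$ is $T(n)$-acyclic, and with the reduction step $L_{T(n)}\L(R;\QF^{\geq m}_M)\simeq L_{T(n)}\L^\s(R;M)=0$ for all primes — the last assertion of the theorem. For $n=1$, $p=2$ the same argument with $K(1)$-acyclicity of $\L^\s(\Z)$ gives $L_{K(1)}\L(R;\QF^{\geq m}_M)=L_{K(1)}\L^\s(R;M)=0$. For $n=1$ and $p$ odd, $\L^\s(R;M)^\cwedge_p$ is a derived $p$-complete module over $\L^\s(\Z)^\cwedge_p\simeq\KO^\cwedge_p$, hence $K(1)$-local: modulo $p$ it is a module over $\KO^\cwedge_p/p$, which (as $\eta$ vanishes $p$-adically, $\KO^\cwedge_p/p$ is a retract of the Morava $K(1)$-module $\mathrm{KU}^\cwedge_p/p$) has all its modules $K(1)$-local, and one passes to the limit over $\KO^\cwedge_p/p^k$. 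Since $p$-completion is a $K(1)$-equivalence and the reduction step makes $\L(R;\QF^{\geq m}_M)\to\L^\s(R;M)$ an equivalence after $p$-completion, $L_{K(1)}\L(R;\QF^{\geq m}_M)\simeq L_{K(1)}\L^\s(R;M)\simeq\L^\s(R;M)^\cwedge_p$, as desired.

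\emph{Main obstacle.} The crux is the reduction step — importing from \cite{CDHI, CDHII, CDHIII} that the genuine-versus-symmetric comparison has fibre bounded above with $2$-power-torsion homotopy (equivalently, that the genuine $\L$-groups agree with the symmetric ones above the relevant degree and the difference below is $2$-power-torsion). The classical facts about $\L^\s(\Z)$ are standard but genuinely needed: at $p=2$ the odd-primary identification with $\KO$ says nothing, and the vanishing $L_{K(1)}\L^\s(\Z)=0$ really relies on the Eilenberg--MacLane nature of $\L^\s(\Z)^\cwedge_2$, whereas $\KO^\cwedge_2$ is itself $K(1)$-locally nontrivial.
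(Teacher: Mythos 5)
Your argument splits cleanly into an odd-primary half and a $p=2$ half; the first is essentially the paper's argument, but the second has a genuine gap.

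\textbf{Odd primes.} Reducing to $\L^\s(R;M)$ via the equivalence $\L(R;\QF^{\geq m}_M)[\tfrac12] \simeq \L^\s(R;M)[\tfrac12]$ (\cite[Proposition 3.1.14]{CDHIII}), and then using the module structure over $\L^\s(\Z)^\cwedge_p\simeq\KO^\cwedge_p$ plus the $K(1)$-locality of $p$-complete $\KO$-modules, matches the paper's route. Your variant of the $K(1)$-locality argument (factoring through $\KO^\cwedge_p/p$ being a retract of $\mathrm{KU}^\cwedge_p/p$) is in the same spirit as the paper's Bousfield-class computation and is fine.

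\textbf{The prime $2$.} Here your reduction step is the problem. You assert that the fibre $F_m=\mathrm{fib}\big(\L(R;\QF^{\geq m}_M)\to\L^\s(R;M)\big)$ is \emph{bounded above} for every discrete ring $R$. This is exactly what is \emph{not} available in general: the bounded-above statement (CDHIII, Corollary 1.3.9) requires $R$ to be coherent of finite global dimension $d$, and the bound in that case is explicitly $d+m-1$; for $d=\infty$ there is no bound, and the paper itself restricts all uses of this fibre-boundedness to the finite-global-dimension setting (see \cref{Cor:finite-dimensional-rings} and the remark after \cref{Theorem1}). Without boundedness, knowing $F_m$ has $2$-power-torsion homotopy gives nothing $T(n)$-locally at $p=2$ --- $\S/2$ is already a counterexample. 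And the strategy cannot simply switch to a module argument over $\L^\s(\Z)$, because $(\Perf(R),\QF^{\geq m}_M)$ is not a module over $(\Perf(\Z),\QF^\s_\Z)$ in Poincar\'e categories; only $\L^\s(R;M)$ is.

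\textbf{What the paper does instead.} The paper avoids any comparison of $\L(R;\QF^{\geq m}_M)$ with $\L^\s(R;M)$ at $p=2$. It observes that $\L(R;\QF^{\geq m}_M)$ is naturally a module over the \emph{genuine} symmetric $\L$-theory $\L^\gs(\Z)=\L(\Z;\QF^{\geq 0}_\Z)$ (not the symmetric one), and then shows that $\L^\gs(\Z)_{(2)}$ admits an $\E_2$-map from $H\Z$ --- via the $\E_\infty$-map $\MSO\to\L^\s(\Z)$ of Land--McCandless, the equivalence of connective covers $\L^\gs(\Z)\simeq_{\tau_{\geq 0}}\L^\s(\Z)$, and the Hopkins--Mahowald presentation of $H\Z_{(2)}$ as an $\E_2$-Thom spectrum over $\MSO_{(2)}$. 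Since $H\Z$ is $T(n)$-acyclic for $n\geq 1$, so is any $H\Z$-module, giving the $p=2$ vanishing directly. The $n\geq 2$ vanishing at odd primes again uses the $\L^\gs(\Z)$-module structure together with $\L^\gs(\Z)[\tfrac12]\simeq\KO[\tfrac12]$. You should replace your reduction step at $p=2$ with this module-over-$\L^\gs(\Z)$ argument.
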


For odd primes, we interpret this theorem as saying that the genuine L-theory of a discrete ring is not far away from being $K(1)$-local. In fact, \cref{Theorem1} and \cite[Corollary 1.3.9]{CDHIII} imply that for coherent rings of finite global dimension $d$, the map
\[ \L(R;\QF^{\geq m}_M) \lto L_{K(1)} \L(R;\QF^{\geq m}_M) \]
is $p$-adic equivalence in degrees greater or equal to $d+m-1$. We view this as an L-theoretic Quillen--Lichtenbaum type theorem (which is, of course, much easier than its $K$-theoretic counterpart). In contrast to the case of $K$-theory, we note here that $p$-adically completed L-theory, and therefore by \cref{Theorem1} also $K(1)$-local L-theory for odd primes, does \emph{not} satisfy Galois descent\footnote{Consider for instance the Galois extensions $\mathbb{R} \to \mathbb{C}$ at odd primes or $\F_2 \to \F_4$ at prime 2.}. In particular, unlike in $K$-theory, there is no good comparison between $K(1)$-localised L-theory and the ($p$-adic) \'etale sheafification of L-theory.

\begin{proof}[Proof of \cref{Theorem1}]
We first show the vanishing statements of the theorem. For this we observe that $\L(R;\QF^{\geq m}_M)$ is a module spectrum over $\L^\gs(\Z)$, so it suffices to show that $L_{T(n)}\L^\gs(\Z)$ vanishes for $n \geq 2$ at all primes and for $n=1$ at prime 2.

We first consider the case $p=2$. It will suffice to show that $\L^\gs(\Z)_{(2)}$ is an $H\Z$-module, since $H\Z$ is $T(n)$-acyclic for $n\geq 1$. To see this, we show that $\L^\gs(\Z)_{(2)}$ receives an $\E_2$-ring map\footnote{In fact, this map refines to an $\E_3$-ring map, see \cite[Remark 3.10]{HLN}.} from $H\Z$. Indeed, we recall from \cite{LMcC} that there is an $\E_\infty$-map $\MSO \to \L^\s(\Z)$. Since $\MSO$ is connective and $\L^\gs(\Z) \to \L^\s(\Z)$ is an equivalence on connective covers by \cite[Corollary 1.3.10]{CDHIII}, one also obtains an $\E_\infty$-map $\MSO \to \L^\gs(\Z)$. Now it follows from the Hopkins--Mahowald theorem describing $H\Z_{(2)}$ as a $\E_2$-Thom-spectrum that there is an $\E_2$-map $H\Z \to \MSO_{(2)}$. In total, we obtain an $\E_2$-map $H\Z \to \L^\gs(\Z)$ as needed, see also \cite[Section 3]{HLN} for more details. We note that the fact that $\L^\s(\Z)_{(2)}$ is a module spectrum over $H\Z$ is due to Taylor--Williams \cite{TW}. Away from prime $2$, the situation is different, but still well understood: There is a canonical equivalence of $\E_\infty$-rings $\KO[\tfrac{1}{2}] \simeq \L^{\gs}(\Z)[\tfrac{1}{2}]$ by the results of \cite{LN} together with the fact that the map of $\E_\infty$-ring spectra $\L^\gs(\Z)[\tfrac{1}{2}] \to \L^\s(\Z)[\tfrac{1}{2}]$ is an equivalence \cite[Proposition 3.1.14]{CDHIII}; the fact that there is an equivalence of homotopy ring spectra $\KO[\tfrac{1}{2}] \simeq \L^\s(\Z)[\tfrac{1}{2}]$ was known long before and is due to Sullivan. Since $\KO$ is $K(n)$-acyclic for $n\geq 2$ (it is essentially $K(1)$-local), the vanishing statements follow.

To see the remaining claim, we again use \cite[Proposition 3.1.14]{CDHIII}, which shows that the map
\[ \L(R;\QF^{\geq m}_M)[\tfrac{1}{2}] \lto \L^\s(R;M)[\tfrac{1}{2}] \]
is an equivalence. Hence, the map
\[ \L(R;\QF^{\geq m}_M) \lto \L^\s(R;M)\]
is a $p$-adic equivalence for odd primes $p$. It therefore suffices to argue why $\L^\s(R;M)^\cwedge_p$ is $K(1)$-local.
For this, we recall that there is an equivalence of $\E_\infty$-rings $\KO^\cwedge_p \simeq \L^\s(\Z)^\cwedge_p$. This shows that $\L^\s(R;M)^\cwedge_p$ is a $p$-complete $\KO^\cwedge_p$-module. For the convenience of the reader, we record here the well-known fact that $p$-complete $\KO$-modules $X$ are $K(1)$-local. So let $Y$ be a $K(1)$-acyclic spectrum and recall that $K(1)$ and $\KO/p$ have the same Bousfield class so that $Y$ is also $\KO/p$-acyclic. We then obtain
\begin{align*}
\map(Y,X) & \simeq \map_{\KO}(\KO\otimes Y,X) \\
	& \simeq \lim_n \map_{\KO}(\KO\otimes Y,X/p^n) \\
	& \simeq \lim_n \map_{\KO}(\Omega (\KO\otimes Y)/p^n,X) \\
	& \simeq \lim_n \map_{\KO}(\Omega (\KO/p^n \otimes Y),X).
\end{align*}
Now in the final term, for fixed $n$, the mapping space vanishes, as one sees inductively by the assumption that $\KO/p \otimes Y = 0$. Hence $X$ is $K(1)$-local as needed.
\end{proof}

\begin{Rmk}
We note here a consequence of the just proven fact that $\L^\s(R;M)^\cwedge_p$ is $K(1)$-local. Recall that the Bousfield--Kuhn functor at height $n$ is a functor $\Phi_n \colon \Spc_* \to \Sp$ with the property that $L_{T(n)} = \Phi_n \circ \Omega^\infty$. For $n=1$ this functor was found by Bousfield, and it was later constructed for all heights by Kuhn \cite{Kuhn}. Therefore, we deduce that for odd primes, the $p$-completion of $\L^\s(R;M)$ can be recovered from $\Omega^\infty \L^\s(R;M)$ (as a space, not as an $\E_\infty$-space). In addition, the $2$-localisation can also be recovered from $\Omega^\infty \L^\s(R;M)$ since $\L^\s(R;M)_{(2)}$ is an Eilenberg--Mac Lane spectrum and periodic. It follows that the spectrum $\L^\s(R;M)$ can be (integrally) recovered from the space $\Omega^\infty \L^\s(R;M)$.
\end{Rmk}

\begin{Prop}\label{Cor1}
Let $R$ and $M$ be as above, and let $p$ be a prime. The map
\[ \GW(R;\QF^{\geq m}_M) \lto \GW^\s(R;M) \]
is a $K(1)$-local equivalence at all primes and a $p$-adic equivalence at odd primes. Likewise, $\GW(R;\QF^{\geq m}_M)$ vanishes $T(n)$-locally for $n\geq 2$.
\end{Prop}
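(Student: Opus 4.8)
The plan is to reduce the statement to \cref{Theorem1} by means of the fundamental fibre sequence of Grothendieck--Witt theory established in \cite{CDHII}, which for any Poincar\'e category $(\C,\QF)$ takes the form
\[ \mathrm{K}(\C)_{hC_2}\xrightarrow{\mathrm{hyp}}\GW(\C,\QF)\lto\L(\C,\QF). \]
The comparison map $\QF^{\geq m}_M\to\QF^\s_M$ is a map of Poincar\'e structures over the identity functor of $\Perf(R)$ that leaves the underlying ($M$-twisted) duality unchanged; it therefore induces a map between the above fibre sequences for $(\Perf(R),\QF^{\geq m}_M)$ and $(\Perf(R),\QF^\s_M)$ which is an equivalence on the terms $\mathrm{K}(R)_{hC_2}$. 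Passing to vertical cofibres in this map of fibre sequences, the cofibre on the $\mathrm{K}$-theory term vanishes, so that
\[ \mathrm{cofib}\bigl(\GW(R;\QF^{\geq m}_M)\lto\GW^\s(R;M)\bigr)\;\simeq\;\mathrm{cofib}\bigl(\L(R;\QF^{\geq m}_M)\lto\L^\s(R;M)\bigr). \]
In particular the $\GW$-comparison map is a $p$-adic, a $K(1)$-local, or a $T(n)$-local equivalence exactly when the corresponding $\L$-comparison map is.

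It thus remains to analyse $\L(R;\QF^{\geq m}_M)\to\L^\s(R;M)$, which I would do by revisiting the proof of \cref{Theorem1}. For odd $p$, that proof invokes \cite[Proposition 3.1.14]{CDHIII} to the effect that this map becomes an equivalence after inverting $2$; its cofibre is then $p$-adically trivial, so the map is a $p$-adic equivalence, and, since $p$-adically trivial spectra are $T(n)$-acyclic for every $n\geq 1$, also a $K(1)$-local equivalence. For $p=2$, the spectrum $\L(R;\QF^{\geq m}_M)_{(2)}$ is an $H\Z$-module, as established in the proof of \cref{Theorem1}, and $\L^\s(R;M)_{(2)}$ is likewise an $H\Z$-module, being a module over $\L^\s(\Z)_{(2)}$, which is an $H\Z$-module by Taylor--Williams \cite{TW}. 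Since $H\Z$ is $K(1)$-acyclic, both sides are $K(1)$-acyclic at $p=2$, so the map --- and hence also the $\GW$-comparison map --- is a $K(1)$-local equivalence there as well. This gives the first two assertions of the proposition.

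For the vanishing $L_{T(n)}\GW(R;\QF^{\geq m}_M)=0$ with $n\geq 2$, I would first note that $\GW(R;\QF^{\geq m}_M)$ is a module spectrum over $\GW(\Z;\QF^{\geq 0}_\Z)$ by the multiplicativity of the genuine Poincar\'e structures, which reduces us to the case $R=\Z$, $M=\Z$, $m=0$. There the fibre sequence exhibits $\GW(\Z;\QF^{\geq 0}_\Z)$ as an extension of $\L^\gs(\Z)$ by $\mathrm{K}(\Z)_{hC_2}$: the former is $T(n)$-acyclic for $n\geq 2$ by \cref{Theorem1}, and the latter is $T(n)$-acyclic because $\mathrm{K}(\Z)$ is, by Mitchell's theorem, and $L_{T(n)}$ commutes with colimits. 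Hence $\GW(\Z;\QF^{\geq 0}_\Z)$, and therefore every module over it, is $T(n)$-acyclic. (Alternatively one can argue directly for all $R$, using that $\mathrm{K}(R)$ is $T(n)$-acyclic for $n\geq 2$ for any discrete ring $R$; see \cite{LMMT}.)

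The one step with genuine content --- as opposed to the formal bookkeeping above --- is the vanishing of $L_{K(1)}\L^\s(R;M)$ at the prime $2$. At odd primes $\L^\s(R;M)^\cwedge_p$ is a (generically nonzero) $p$-complete $\KO^\cwedge_p$-module, but it agrees with $L_{K(1)}\L(R;\QF^{\geq m}_M)$ by \cref{Theorem1}, so the comparison is a $K(1)$-local equivalence there even though neither side is $K(1)$-acyclic. One might accordingly worry that at $p=2$ the symmetric L-spectrum picks up height-one information invisible to the genuine one --- exactly the sort of phenomenon that makes $K(1)$-local algebraic $K$-theory rich --- and what rules this out is the classical theorem of Taylor--Williams that $\L^\s(\Z)$ is $2$-locally a generalised Eilenberg--Mac Lane spectrum. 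This is the input that makes the genuine-to-symmetric comparison a $K(1)$-local equivalence uniformly in $p$.
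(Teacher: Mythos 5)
Your argument is correct and follows essentially the same route as the paper: the paper invokes the pullback square $\GW(R;\QF^{\geq m}_M) \to \L(R;\QF^{\geq m}_M)$ over $\GW^\s(R;M) \to \L^\s(R;M)$ from \cite[Corollary 4.4.14]{CDHII}, which is the same content as your map of fibre sequences with constant $K$-theory term, and then reduces to \cref{Theorem1}. For the $T(n)$-vanishing with $n \geq 2$, the paper applies the fibre sequence for $R$ directly together with the observation that $L_{T(n)}(K(R)_{hC_2}) \simeq L_{T(n)}\bigl((L_{T(n)}K(R))_{hC_2}\bigr)$ and Mitchell's theorem, rather than first reducing to $R=\Z$ via a module structure over $\GW(\Z;\QF^{\geq 0}_\Z)$ as you do; both work, and your parenthetical alternative is exactly the paper's argument. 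One small imprecision: you justify the $T(n)$-acyclicity of $K(\Z)_{hC_2}$ by saying ``$L_{T(n)}$ commutes with colimits''; what one really needs (and what is automatic for any Bousfield localisation) is that the class of $T(n)$-acyclic spectra is closed under colimits, which is the form the paper uses.
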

\begin{proof}
As a consequence of \cite[Corollary 4.4.14]{CDHII}, there is a pullback square
\[ \begin{tikzcd}
	\GW(R;\QF^{\geq m}_M) \ar[r] \ar[d] & \L(R;\QF^{\geq m}_M) \ar[d] \\
	\GW^\s(R;M) \ar[r] & \L^\s(R;M)
\end{tikzcd}\]
and the right vertical map is a $p$-adic equivalence for odd primes, as was shown in the proof of \cref{Theorem1}, and is a
$K(1)$-local equivalence by \cref{Theorem1} for all primes. To see the second part, we again consider the fundamental fibre sequence \cite[Corollary 4.4.14]{CDHII}
\[ K(R)_{hC_2} \lto \GW(R;\QF^{\geq m}_M) \lto \L(R;\QF^{\geq m}_M)\]
and use \cref{Theorem1} and that $L_{T(n)}(K(R)_{hC_2}) \simeq L_{T(n)}\big((L_{T(n)}K(R))_{hC_2}\big)$ which vanishes for $n\geq 2$ by Mitchell's theorem \cite{Mitchell}.
\end{proof}

In addition, we have the following invariance result for $p$-adically completed L-theory, see also \cite[Proposition 3.6.4]{Ranicki-yellow-book}.
\begin{Prop}\label{Prop4}
Let $R$ and $M$ be as above, and let $p$ be a prime. Then the map 
\[ \L(R;\QF^{\geq m}_M) \lto \L(R[\tfrac{1}{p}];\QF^{\geq m}_M) \]
is an equivalence after inverting 2. In particular it is a $p$-adic equivalence for odd primes $p$.
\end{Prop}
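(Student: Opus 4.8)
The plan is to reduce to symmetric $\L$-theory, apply the $\L$-theory localisation sequence for the map $R\to R[\tfrac1p]$, and then check that the resulting ``torsion'' term vanishes after inverting $2$.

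First, by \cite[Proposition 3.1.14]{CDHIII} the comparison map $\QF^{\geq m}_M\to\QF^\s_M$ becomes an equivalence after inverting $2$ --- this was already used in the proof of \cref{Theorem1} --- and likewise over $R[\tfrac1p]$ with the induced invertible module with involution $M[\tfrac1p]=M\otimes_RR[\tfrac1p]$. So it suffices to show that
\[ \L^\s(R;M)[\tfrac12]\lto\L^\s(R[\tfrac1p];M[\tfrac1p])[\tfrac12] \]
is an equivalence. For this I would use that $\Perf(R)\to\Perf(R[\tfrac1p])$ exhibits the target as the idempotent completion of the Verdier quotient of $\Perf(R)$ by the thick subcategory $\mathcal{T}\subseteq\Perf(R)$ of perfect complexes with $p$-power torsion cohomology; this subcategory is preserved up to a shift by the $M$-twisted duality and carries the ``linking'' Poincar\'e structure $\QF^{\mathrm{lk}}_M$ with dualising object $\mathrm{cofib}(M\to M[\tfrac1p])[-1]$, so that the above underlies a Poincar\'e--Verdier sequence up to idempotent completion. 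Since inverting $2$ annihilates the ($2$-torsion) discrepancy between the relevant $K_0$-decorations, $\L$-theory after inverting $2$ is insensitive both to idempotent completion and to the passage from a Verdier to a Poincar\'e--Verdier sequence, so by exactness of $\L$-theory \cite{CDHII} we obtain a fibre sequence
\[ \L(\mathcal{T},\QF^{\mathrm{lk}}_M)[\tfrac12]\lto\L^\s(R;M)[\tfrac12]\lto\L^\s(R[\tfrac1p];M[\tfrac1p])[\tfrac12]. \]
It then remains to prove that the fibre term vanishes.

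This is the main point. Every object of $\mathcal{T}$ is annihilated by a power of $p$ (as one sees from compactness together with $T[\tfrac1p]=0$), so an iterated use of the localisation sequence --- a d\'evissage argument along the filtration by the order of the annihilator --- reduces the computation of $\L(\mathcal{T},\QF^{\mathrm{lk}}_M)[\tfrac12]$ to the $\L$-theory, after inverting $2$, of Poincar\'e categories that are linear over the $\F_p$-algebra $R/pR$ and carry (possibly shifted) symmetric Poincar\'e structures. It therefore suffices to know that $\L^\s(B;N)[\tfrac12]=0$ for every $\F_p$-algebra $B$ with $p$ odd and every invertible $B$-module with involution $N$; this is classical, see \cite[Proposition 3.6.4]{Ranicki-yellow-book}, the point being that the symmetric $\L$-groups of such $B$ are $2$-primary torsion. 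For instance $\langle1,1\rangle\cong\langle-1,-1\rangle$ over $B$ because $-1$ is a sum of two squares in $\F_p\subseteq B$, whence $q^{\oplus4}$ is metabolic for every form $q$ and $4$ annihilates $\L^\s_0(B;N)$; the higher groups are handled analogously. Finally, writing $C$ for the cofibre of the map in question, we have $C[\tfrac12]=0$, and since $\Z[\tfrac12]$ is flat over $\Z$ this forces $\pi_*C$ to be $2$-primary torsion, hence $C/p\simeq0$ and $C^\cwedge_p\simeq0$ for every odd prime $p$; this yields the asserted $p$-adic equivalence for odd $p$.

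The step I expect to be the main obstacle is the d\'evissage in the third paragraph: d\'evissage is notoriously delicate for $\L$- and Grothendieck--Witt theory (it fails in general), and carrying it out rigorously in the derived, possibly noncommutative, setting --- keeping track of the shifts of Poincar\'e structures and of the $K_0$-decorations --- is the part that needs real care. The role of the classical input from \cite{Ranicki-yellow-book} is precisely to let one avoid doing this entirely by hand.
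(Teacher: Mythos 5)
Your proposal takes a genuinely different route from the paper, and the step you flag in your last paragraph as ``the main obstacle'' — the d\'evissage — is not merely delicate but an actual gap that your argument does not fill. The paper avoids d\'evissage entirely. Its proof applies the localisation--completion (arithmetic fracture) pullback square from \cite[Proposition 2.3.6]{CDHIII}, which exhibits $\L(R;\QF^{\geq m}_M)$ as the pullback of $\L(R^\cwedge_p;\QF^{\geq m}_M) \to \L(R^\cwedge_p[\tfrac1p];\QF^{\geq m}_M) \leftarrow \L(R[\tfrac1p];\QF^{\geq m}_M)$, and then observes that the two terms involving $R^\cwedge_p$ are module spectra over the ring spectrum $\L^\gs(\Z^\cwedge_p)$; since $\pi_0\L^\gs(\Z^\cwedge_p)$ is $2$-primary torsion, these modules vanish after inverting $2$, so the left vertical map of the square is a $\Z[\tfrac12]$-equivalence. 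No localisation fibre sequence, no torsion category, no d\'evissage.

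Several of your subordinate points are fine: the reduction to $\QF^\s_M$ after inverting $2$ via \cite[Proposition 3.1.14]{CDHIII}, the identification of the kernel $\mathcal{T}$ of $\Perf(R)\to\Perf(R[\tfrac1p])$, the fact that $D_M$ restricts to $\mathcal{T}$, and the concluding observation that a $\Z[\tfrac12]$-equivalence with ($2$-torsion) cofibre is a $p$-adic equivalence for odd $p$. The problem is the passage from $\L(\mathcal{T},\QF^{\mathrm{lk}}_M)[\tfrac12]$ to symmetric L-theory of $\F_p$-linear Poincar\'e categories. There is no general d\'evissage theorem for L-theory of torsion categories over arbitrary discrete rings; the known cases (Ranicki) are Dedekind domains, where the torsion category is assembled from semisimple layers, and even there one must carefully track the shift of the linking duality. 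For $R$ not Noetherian, or noncommutative with $M\neq R$, the filtration ``by order of the annihilator'' does not readily produce a filtration of the \emph{Poincar\'e} category $(\mathcal{T},\QF^{\mathrm{lk}}_M)$ by Poincar\'e subcategories with $R/pR$-linear subquotients, and ``linear over $R/pR$'' is itself not the right notion in the noncommutative twisted setting you are working in. Asserting this reduction is the gap.

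Finally, your appeal to \cite[Proposition 3.6.4]{Ranicki-yellow-book} does not close the gap. The paper cites that result as the classical precursor to \cref{Prop4} \emph{itself} — it is a localisation invariance statement for symmetric L-theory, established by completion/fracture methods in the spirit of the paper's argument — not a d\'evissage theorem, nor a statement that $\L^\s$ of $\F_p$-algebras is $2$-torsion (that is true, but by the same module-over-a-$2$-torsion-ring-spectrum observation that the paper uses for $\L^\gs(\Z^\cwedge_p)$, not because of a special reference). So the sentence in which you say the classical input lets you ``avoid doing this entirely by hand'' amounts to assuming the localisation theorem you are trying to prove. If you want to salvage the localisation-sequence strategy, you would need either a genuine d\'evissage theorem in this setting or a direct module-theoretic vanishing argument for $\L(\mathcal{T},\QF^{\mathrm{lk}}_M)$ — and the latter is essentially what the paper supplies, in simpler form, via the fracture square.
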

\begin{proof}
If the $p$-primary torsion of $R$ is bounded, \cite[Proposition 2.3.6]{CDHIII} gives a localisation-completion pullback square
\[ \begin{tikzcd}
	\L(R;\QF^{\geq m}_M) \ar[r] \ar[d] & \L(R^\cwedge_p;\QF^{\geq m}_M) \ar[d] \\
	\L(R[\frac{1}{p}];\QF^{\geq m}_M) \ar[r] & \L(R^\cwedge_p[\tfrac{1}{p}];\QF^{\geq m}_M)
\end{tikzcd}\]
in which the terms on the right hand side are module spectra over $\L^\gs(\Z^\cwedge_p) = \L(\Z^\cwedge_p;\QF^{\geq 0}_\Z)$. The assumption on the boundedness of the torsion was only made in loc.\ cit.\ to ensure that the symbol $R^\cwedge_p$ is unambiguous; under the boundedness assumption on the $p$-primary torsion, the algebraic completion and the derived completion agree. The above localisation-completion square holds true unconditionally for the derived completion, and the module structure arguments carry over to this case as well.
It hence suffices to show that $\L(\Z^\cwedge_p;\QF^{\geq 0}_\Z)$ vanishes after inverting 2, which follows from the fact that it is a ring spectrum whose $\pi_0$ is 2-primary torsion:
\[ \L_0^\gs(\Z^\cwedge_p) \cong \begin{cases} \Z/8 \oplus \Z/2 & \text{ if } p = 2, \\
									\Z/2\oplus \Z/2 & \text{ if } p \equiv 1 \mod 4,  \\
									\Z/4 & \text{ if } p \equiv 3 \mod 4 \end{cases} \]
see \cite[pg.\ 374]{Ranicki-yellow-book}, using \cite[Corollary 1.2.16]{CDHIII}.
\end{proof}

We deduce the following corollary, which is the analog of the invariance result of Bhatt--Clausen--Mathew \cite{BCM} for $K(1)$-local $K$-theory which was then coined \emph{purity} in \cite{LMMT}.
\begin{Cor}\label{Cor5}
Let $R$ and $M$ be as above and let $p$ be prime. Then the map 
\[ \GW(R;\QF^{\geq m}_M) \lto \GW(R[\tfrac{1}{p}];\QF^{\geq m}_M) \]
is a $K(1)$-local equivalence.
\end{Cor}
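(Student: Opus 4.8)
The plan is to combine the fundamental fibre sequence relating Grothendieck--Witt theory, algebraic $K$-theory, and L-theory with the already-established purity statements on each of these pieces. First I would invoke the pullback square from \cite[Corollary 4.4.14]{CDHII} (as used in the proof of \cref{Cor1}) for both $R$ and $R[\tfrac1p]$, fitting together into a commutative cube whose two faces are these squares and whose horizontal maps are induced by $R \to R[\tfrac1p]$. Applying $L_{K(1)}$ everywhere, it suffices to show that the map is a $K(1)$-local equivalence on each of the other three corners of the square, namely on $\L(-;\QF^{\geq m}_M)$, on $\L^\s(-;M)$, and on $\GW^\s(-;M)$.

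For the two L-theoretic corners this is immediate: by \cref{Prop4} the map $\L(R;\QF^{\geq m}_M) \to \L(R[\tfrac1p];\QF^{\geq m}_M)$ is a $p$-adic equivalence for odd $p$, hence a $K(1)$-local equivalence at odd primes, while at $p=2$ both sides become $K(1)$-locally trivial by \cref{Theorem1}; the same argument applied with $\QF^\s_M$ in place of $\QF^{\geq m}_M$ handles $\L^\s(-;M)$ (noting that $\L^\s(R;M)^\cwedge_2$ is $2$-torsion in $\pi_0$ after inverting nothing — more precisely one uses that $L_{K(1)}\L^\s(\Z)$ is again controlled by $\KO^\cwedge_p$ and the odd-primary $p$-adic equivalence comes from the localisation--completion square exactly as in \cref{Prop4}). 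So the remaining point is the genuine symmetric corner: the map
\[ \GW^\s(R;M) \lto \GW^\s(R[\tfrac1p];M) \]
must be shown to be a $K(1)$-local equivalence. Here I would use that $\QF^\s_M$ is the symmetric Poincar\'e structure, together with the fibre sequence $K(R)_{hC_2} \to \GW^\s(R;M) \to \L^\s(R;M)$: since $\L^\s$-purity is already in hand, it reduces to showing that $K(R)_{hC_2} \to K(R[\tfrac1p])_{hC_2}$ is a $K(1)$-local equivalence. By the identity $L_{K(1)}(K(R)_{hC_2}) \simeq L_{K(1)}\big((L_{K(1)}K(R))_{hC_2}\big)$ already used in \cref{Cor1}, this follows from the Bhatt--Clausen--Mathew purity theorem for $K(1)$-local $K$-theory, which says exactly that $L_{K(1)}K(R) \to L_{K(1)}K(R[\tfrac1p])$ is an equivalence, combined with the fact that homotopy orbits commute with $K(1)$-localisation up to a further $K(1)$-localisation and hence preserve $K(1)$-local equivalences.

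I expect the main obstacle to be bookkeeping rather than anything deep: one must be careful that the various pullback squares and fibre sequences are compatible with the ring map $R \to R[\tfrac1p]$ and with the module structures, and that the hypotheses of \cref{Prop4} (or rather its derived-completion reformulation, which is stated to hold unconditionally) are genuinely available for $R[\tfrac1p]$ as well — but since $R[\tfrac1p]$ has no $p$-torsion this is automatic. A secondary point to get right is that at $p=2$ one should not claim a $p$-adic equivalence of Grothendieck--Witt spectra; the statement is only a $K(1)$-local equivalence, and at $p=2$ both the L-theoretic and (after $K(1)$-localisation, via BCM) the $K$-theoretic inputs still go through, so the conclusion holds uniformly in $p$. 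Alternatively, and perhaps more cleanly, one can bypass the square entirely: write the fibre sequence $K(R)_{hC_2} \to \GW(R;\QF^{\geq m}_M) \to \L(R;\QF^{\geq m}_M)$ of \cite[Corollary 4.4.14]{CDHII}, apply $L_{K(1)}$, and observe that the L-theory term is $K(1)$-locally invariant under $R \mapsto R[\tfrac1p]$ by \cref{Prop4} (resp.\ vanishes at $p=2$ by \cref{Theorem1}) while the $K(R)_{hC_2}$ term is $K(1)$-locally invariant by BCM purity as above; by the five lemma in the $K(1)$-local category the middle map is then a $K(1)$-local equivalence. I would present this second route as the proof.
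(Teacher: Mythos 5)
Your second route---the one you say you would actually present---is exactly the paper's proof: apply $L_{K(1)}$ to the fibre sequence $K(R)_{hC_2} \to \GW(R;\QF^{\geq m}_M) \to \L(R;\QF^{\geq m}_M)$, use \cref{Prop4} (odd $p$) resp.\ \cref{Theorem1} ($p=2$) for the L-theory term and Bhatt--Clausen--Mathew purity for the $K(R)_{hC_2}$ term. The first route via the commutative cube of pullback squares is also sound but does more work than necessary; good instinct to discard it.
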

\begin{proof}
We consider the diagram of fibre sequences
\[\begin{tikzcd}
	K(R)_{hC_2} \ar[r] \ar[d] & \GW(R;\QF^{\geq m}_M) \ar[r] \ar[d] & \L(R;\QF^{\geq m}_M) \ar[d] \\
	K(R[\tfrac{1}{p}])_{hC_2} \ar[r] & \GW(R[\tfrac{1}{p}];\QF^{\geq m}_M) \ar[r] & \L(R[\tfrac{1}{p}];\QF^{\geq m}_M)
\end{tikzcd}\]
in which the left vertical map is an equivalence by \cite{BCM, Mathew, LMMT}; notice that in the references this result is sometimes phrased with non-connective $K$-theory instead, but the $K(1)$-localisation does not see the difference between the two versions. By \cref{Prop4}, the right vertical map is a $p$-adic equivalence for odd $p$, and hence also a $K(1)$-local equivalence. If $p=2$ then both terms appearing on the right are $K(1)$-acyclic by \cref{Theorem1}, and hence the map is also a $K(1)$-local equivalence.
\end{proof}

\begin{Rmk}
For $p=2$, the map 
\[ K(R)_{hC_2} \lto \GW(R;\QF^{\geq m}_M) \]
is a $K(1)$-local equivalence, since the L-theory term vanishes $K(1)$-locally. Similarly, it follows that the map $\GW^\s(R;M) \to K(R)^{hC_2}$ is a $K(1)$-local equivalence since $K(R)^{tC_2}$ also vanishes $K(1)$-locally, as it is also a module over $\L^\gs(\Z)$. In other words, $K(1)$-locally, the homotopy limit problem is true for any ring.
\end{Rmk}
\begin{Rmk}
For any prime $p$, the square
\[\begin{tikzcd}
	K(R)_{hC_2} \ar[r] \ar[d] & \GW(R;\QF^{\geq m}_M) \ar[d] & \\
	K(R[\tfrac{1}{p}])_{hC_2} \ar[r] & \GW(R[\tfrac{1}{p}];\QF^{\geq m}_M) 
\end{tikzcd}\]
is a pullback after inverting 2 by \cref{Prop4}.
\end{Rmk}

\cref{Theorem1} also has consequences for other L-spectra, not associated to the genuine family of Poincar\'e structure $\QF^{\geq m}_M$ considered so far. For instance, we can also treat the spectrum $\L^\b(\Z)$ obtained from the Burnside Poincar\'e structure $\QF^\b$ on $\Perf(\Z)$, appearing in \cite[Corollary 1.2.24]{CDHIII} and \cite{DO}. This Burnside Poincar\'e structure is a non-abelian derived structure, and the results we state next apply more generally to any non-abelian derived structure on $\Perf(\Z)$, for instance those associated to a form parameter in the sense of Bak, and more generally in the sense of Schlichting \cite{Schlichting}, see \cite[Definition 4.2.26]{CDHI} for a definition in the language of Poincar\'e categories. Recall the definition of an $r$-symmetric Poincar\'e structure from \cite[Definition 1.1.2]{CDHIII} and the fact that non-abelian derived structures are always 0-symmetric \cite[Remark 1.3.17]{CDHIII}.

\begin{Cor}\label{Cor:finite-dimensional-rings}
Let $R$ be a coherent ring of finite global dimension and let $\QF$ be an $r$-symmetric Poincar\'e structure on $\Perf(R)$ with underlying module with involution $M$. Then we have
\[ L_{K(1)}\L(R;\QF) \simeq \begin{cases} 0 & \text{ if } p=2, \\ \L^\s(R;M)^\cwedge_p  & \text{ if } p \neq 2 \end{cases}\]
In addition, $L_{T(n)}\L(R;\QF)$ vanishes for $n\geq 2$.
\end{Cor}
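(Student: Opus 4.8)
The plan is to reduce the statement to \cref{Theorem1}.

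First consider the odd-primary case. I claim that for \emph{any} Poincaré structure $\QF$ on $\Perf(R)$ with underlying module with involution $M$, the map $\L(R;\QF)\to\L^\s(R;M)$ induced by the terminal map $\QF\to\QF^\s_M$ among Poincaré structures with bilinear part $B_M$ becomes an equivalence after inverting $2$. Indeed, $\QF\to\QF^\s_M$ is an equivalence on underlying bilinear parts, and --- exactly as in the proof of \cite[Proposition 3.1.14]{CDHIII} --- the induced map on L-theory has cofibre controlled by the L-theory of the fibre of $\QF\to\QF^\s_M$, which is an exact (``linear'') functor whose L-theory is a $C_2$-Tate construction, hence $2$-torsion. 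Thus $\L(R;\QF)\to\L^\s(R;M)$ is a $p$-adic equivalence at odd primes, and since $\L^\s(R;M)^\cwedge_p$ is $K(1)$-local --- being a $p$-complete $\KO$-module, as recorded in the proof of \cref{Theorem1} --- this gives $L_{K(1)}\L(R;\QF)\simeq\L^\s(R;M)^\cwedge_p$ for $p\neq2$. This step uses neither $r$-symmetry nor the finiteness hypotheses on $R$; it also shows that $\L(R;\QF)[\tfrac12]$ is a module over $\L^\s(\Z)[\tfrac12]\simeq\KO[\tfrac12]$, so the prime-to-$2$ part of $\L(R;\QF)$ is already $T(n)$-acyclic for $n\geq2$.

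It therefore remains to show that the $2$-localisation $\L(R;\QF)_{(2)}$ is $T(n)$-acyclic for all $n\geq1$ and $K(1)$-acyclic; together with the previous step this yields $L_{T(n)}\L(R;\QF)=0$ for $n\geq2$ and $L_{K(1)}\L(R;\QF)=0$ at $p=2$. For the genuine structures $\QF^{\geq m}_M$ this acyclicity is precisely what the proof of \cref{Theorem1} provides, via the module structure over $\L^\gs(\Z)$ whose $2$-localisation is an $H\Z$-module --- and $H\Z$-modules are $T(n)$-acyclic for $n\geq1$, hence also $K(1)$-acyclic. I would reduce the general case to this one. Since $R$ is coherent of finite global dimension $d$, the genuine structures form a sufficiently fine filtration on $\Perf(R)$, so the condition of being $r$-symmetric --- which by \cite[Definition 1.1.2]{CDHIII} is imposed on the underlying module with involution $M$ --- can be leveraged globally: together with the comparison maps to the genuine structures that $r$-symmetry supplies, it should exhibit $(\Perf(R),\QF)$, up to a bounded ``linear'' correction, as a module over $(\Perf(\Z),\QF^{\geq m}_\Z)$ in the symmetric monoidal $\infty$-category of Poincaré categories, for a suitable $m=m(r,d)$, using the compatibility of the genuine structures with base change and tensor products from \cite{CDHI} and the finiteness statement \cite[Corollary 1.3.9]{CDHIII}. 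Applying the lax symmetric monoidal L-theory functor, $\L(R;\QF)$ then differs from a module over $\L(\Z;\QF^{\geq m}_\Z)$ by a bounded $2$-torsion $H\Z$-module; since $\L(\Z;\QF^{\geq m}_\Z)_{(2)}$ is an $H\Z$-module by (the proof of) \cref{Theorem1} applied to $\Z$ with $M=\Z$, and extensions of $T(n)$- and $K(1)$-acyclic spectra are again acyclic, $\L(R;\QF)_{(2)}$ has the required acyclicity. Combining the two parts gives the corollary.

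The delicate point is the reduction in the last paragraph --- passing from the degree-wise $r$-symmetry condition to a module structure (up to bounded error) over a $\Z$-linear genuine Poincaré category --- which is exactly where coherence and finite global dimension of $R$ are used: without uniform control of the amplitude of perfect $R$-modules one cannot compare $\QF$ with the genuine structures on all of $\Perf(R)$ (and indeed the vanishing at $p=2$ genuinely fails for $\QF^\s_M=\QF^{\geq-\infty}_M$, which is not $r$-symmetric for any finite $r$). Everything else I expect to be essentially formal given \cref{Theorem1} and the cited results of \cite{CDHI,CDHIII}.
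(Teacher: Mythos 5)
Your proposal misses the one key observation that makes this corollary essentially trivial, and along the way makes two claims that I don't think hold up. The paper's actual proof is a one-liner: \cite[Corollary 1.3.9]{CDHIII} states that, for a coherent ring $R$ of finite global dimension and an $r$-symmetric $\QF$, the fibre of $\L(R;\QF)\to\L^\s(R;M)$ is \emph{bounded above}. Since bounded above spectra are $T(n)$-acyclic for every $n\geq 1$ (and every prime), the map is a $T(n)$-local equivalence for all $n\geq 1$; applying the same observation to $\QF^{\geq m}_M$ identifies $L_{T(n)}\L^\s(R;M)$ with $L_{T(n)}\L(R;\QF^{\geq m}_M)$, and now \cref{Theorem1} finishes the proof. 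This is where the $r$-symmetry and finiteness hypotheses enter, and they enter uniformly at all primes and heights --- not just at $p=2$.

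Your odd-primary reduction has a concrete gap. You assert that for \emph{any} Poincar\'e structure $\QF$ with bilinear part $B_M$ the comparison $\L(R;\QF)\to\L^\s(R;M)$ is an equivalence after inverting $2$, justified by ``the induced map on L-theory has cofibre controlled by the L-theory of the fibre of $\QF\to\QF^\s_M$, which is \ldots\ a $C_2$-Tate construction, hence $2$-torsion.'' But L-theory is not additive in the Poincar\'e structure: a fibre sequence of quadratic functors does not give a fibre sequence of L-spectra, so ``the L-theory of the fibre'' does not control the cofibre. (The actual description of the relative term --- the Harpaz/Hebestreit--Nikolaus--Shah equaliser formula quoted in the paper's proof of \cref{Prop-general} --- is more subtle.) Moreover the conclusion is doubtful for an arbitrary $\QF$: the linear part $\Lambda$ of $\QF$ with its map $\Lambda\to B^{tC_2}[\Delta]$ can be fairly arbitrary, and there is no reason for its fibre to be $2$-torsion. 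What \cite[Proposition 3.1.14]{CDHIII} establishes (and what the proof of \cref{Theorem1} uses) is the statement for the \emph{genuine} structures $\QF^{\geq m}_M$ specifically; for the general $r$-symmetric $\QF$ of the corollary you should instead invoke the bounded-above fibre from \cite[Corollary 1.3.9]{CDHIII}, exactly as above. So your remark that ``this step uses neither $r$-symmetry nor the finiteness hypotheses on $R$'' is the flag that something has gone wrong.

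Your $2$-primary/higher-height step is also not in good shape: the proposed reduction --- exhibiting $(\Perf(R),\QF)$ ``up to a bounded linear correction'' as a module over $(\Perf(\Z),\QF^{\geq m}_\Z)$ for a suitable $m$ --- is left at the level of a plan (``it should exhibit\ldots''), and it is not needed. Once you have the bounded-above fibre, the $T(n)$-acyclicity for $n\geq 1$ is immediate, and the module-structure machinery for $\L^\gs(\Z)$ already does its job inside \cref{Theorem1}; you do not need to reproduce it here. In short: replace both paragraphs by the single use of \cite[Corollary 1.3.9]{CDHIII} together with the fact that bounded above spectra are $T(n)$-acyclic for $n\geq 1$, and then cite \cref{Theorem1}.
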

\begin{proof}
In \cite[Corollary 1.3.9]{CDHIII}, it is shown that the map 
\[ \L(R;\QF) \lto \L^\s(R;M) \]
has bounded above fibre and is therefore a $T(n)$-local equivalence for all $n\geq 1$. The result thus follows from \cref{Theorem1}.
\end{proof}

\begin{Rmk}
In particular, we deduce that $L_{T(n)}\L^\b(\Z)$ vanishes for $n \geq 2$ and all primes, and for $n=1$ at prime 2. In contrast to $\L^\gs(\Z)$, we do not expect $\L^\b(\Z)$ to be 2-adically an $H\Z$-algebra or $p$-adically (for odd primes) a $\ko$-algebra.
\end{Rmk}

\cref{Theorem1} also has consequences for not necessarily non-abelian derived Poincar\'e structures on $\Perf(R)$: A classical example one has here is the Tate Poincar\'e structure $\QF^\t$ for a commutative ring $R$, see \cite[Example 3.2.12]{CDHI}. We will prove a more general result later, and hence defer the proof of the following proposition to later. 

\begin{Prop}\label{Prop-TF}
Let $R$ be a commutative ring. Then the canonical map $\L^\q(R) \to \L^\t(R)$ is a $T(n)$-local equivalence for $n\geq 1$ and all primes.
\end{Prop}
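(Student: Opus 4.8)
The plan is to deduce \cref{Prop-TF} from the more general result proven below, to the effect that for a discrete ring $R$ the $T(n)$-localisation of $\L(R;\QF)$ depends on the Poincar\'e structure $\QF$ only through its underlying module with involution $M$ — equivalently, that the formula of \cref{Theorem1} computes $L_{T(n)}\L(R;\QF)$ for \emph{every} Poincar\'e structure $\QF$ on $\Perf(R)$, not only for the genuine family. Granting this, since $\QF^\q_R$ and $\QF^\t_R$ both have underlying module with involution $R$ with trivial involution, both $\L^\q(R)$ and $\L^\t(R)$ are $T(n)$-acyclic for $n\geq 2$ and at the prime $2$, while for odd $p$ both have $K(1)$-localisation $\L^\s(R)^\cwedge_p$. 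As the canonical maps factor as $\L^\q(R)\to\L^\t(R)\to\L^\s(R)$ with composite the symmetrisation map, which is a $p$-adic equivalence for odd $p$ by \cite[Proposition 3.1.14]{CDHIII}, one reads off that $\L^\q(R)\to\L^\t(R)$ induces an equivalence on all these localisations and is therefore a $T(n)$-local equivalence.

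Concretely, I would organise the argument as follows. For odd $p$ no general result is needed: \cite[Proposition 3.1.14]{CDHIII} gives that $\L^\q(R)[\tfrac12]\to\L^\t(R)[\tfrac12]$ and $\L^\t(R)[\tfrac12]\to\L^\s(R)[\tfrac12]$ are equivalences (each side being $\L^\s(R)[\tfrac12]$), so $\L^\q(R)\to\L^\t(R)$ is an equivalence after inverting $2$, and hence a $T(n)$-local equivalence at every odd prime for all $n\geq 1$. At the prime $2$ I would instead show that both spectra become $T(n)$-acyclic for all $n\geq1$, by a module-spectrum argument in the spirit of the proof of \cref{Theorem1}: $\L^\q(R)$ is a module over $\L^\s(\Z)$ (symmetric forms act on quadratic ones), and $\L^\s(\Z)_{(2)}$ is a generalised Eilenberg--Mac Lane $H\Z$-module spectrum by Taylor--Williams \cite{TW}, hence $T(n)$-acyclic for $n\geq1$; the same should hold for $\L^\t(R)$ once one knows that $\QF^\t_R$ is a module over the symmetric Poincar\'e structure $\QF^\s_\Z$, which is exactly the feature distinguishing the Tate structure (a ``classical'', non-genuine structure, hence a $\QF^\s_\Z$-module) from the genuine family (which is instead a module over $\L^\gs(\Z)$). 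This module structure is what I expect the deferred general statement to encode.

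The main obstacle is precisely this last point. For non-abelian derived (or $r$-symmetric) structures the map to $\L^\s$ has bounded-above fibre, so \cref{Cor:finite-dimensional-rings} follows at once from \cref{Theorem1}; no such bound is available for $\QF^\t$. A route that avoids identifying the precise module structure is to analyse the relative term directly: the cofibre of the map of Poincar\'e structures $\QF^\t_R\to\QF^\s_R$ is a linear functor whose values are assembled from $C_2$-Tate constructions of $HR$-modules, and since $(HR)^{tC_2}$ — formed with the trivial $C_2$-action — is a module over $H\Z/2$, these Tate constructions, and hence the relative L-theory $\mathrm{cofib}(\L^\t(R)\to\L^\s(R))$ built from them, are $H\Z/2$-linear, in particular $T(n)$-acyclic for all $n\geq1$. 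Since $\L^\q(R)\to\L^\s(R)$ is itself a $T(n)$-local equivalence for $n\geq1$ — its cofibre, the hyperquadratic L-spectrum, being a module over $\L^\s(\Z)$, thus $H\Z$-linear after $2$-localisation by \cite{TW} and zero after inverting $2$ by \cite[Proposition 3.1.14]{CDHIII} — a two-out-of-three argument then yields the proposition.
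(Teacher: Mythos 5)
The paper's proof of \cref{Prop-TF} is a one-line reduction to \cref{Prop-general}, which applies because $HR$ is a $T(n)$-acyclic $\E_1$-ring for every $n\geq 1$; the content is the formula from \cite{HNS} identifying the cofibre of $\L^\q(R)\to\L(R;\QF)$ with an equaliser of $T(n)$-acyclic mapping spectra. Your opening paragraph correctly anticipates this reduction, and your odd-prime observation is sound (although \cite[Proposition 3.1.14]{CDHIII} as cited concerns the genuine tower $\QF^{\geq m}_M$ rather than $\QF^\t$; the fact you actually need is the general statement that $\L^\q\to\L\to\L^\s$ is an equivalence after inverting $2$ for any Poincar\'e structure on a fixed bilinear part, which holds for the same Tate-construction reason).

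The real problem is at $p=2$. Your primary suggestion is that $\QF^\t_R$ should be a module over $\QF^\s_\Z$, so that $\L^\t(R)_{(2)}$ inherits an $H\Z$-module structure through Taylor--Williams and the $\E_2$-map $H\Z\to\MSO_{(2)}\to\L^\s(\Z)_{(2)}$. But the paper explicitly remarks (directly after \cref{Prop-TF}) that one does \emph{not} expect $\L^\t(R)$ to be $2$-adically an $H\Z$-module; this is precisely the feature distinguishing $\QF^\t$ from the genuine family and is why the module-theoretic shortcut that works for $\QF^{\geq m}_M$ is not available here. If your claimed $\QF^\s_\Z$-module structure on $\QF^\t_R$ existed, the paper's expectation would be false. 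The Tate structure on $\Perf(R)$ is naturally an algebra over the unit $(\Spf,\QF^u)$ of $\Cat_\infty^{\mathrm p}$, not a module over $(\Perf(\Z),\QF^\s_\Z)$.

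Your fallback — that the relative term $\mathrm{cofib}(\L^\t(R)\to\L^\s(R))$ is ``assembled from $C_2$-Tate constructions of $HR$-modules'' and therefore $H\Z/2$-linear — is essentially the right heuristic but is stated at a level of vagueness that hides the main difficulty. Relative L-theory for a map of Poincar\'e structures with common bilinear part is not simply the values of the relative linear part; it is a genuinely nontrivial invariant of that linear functor, and controlling it is exactly what the \cite{HNS} equaliser formula achieves. Having the linear part take $H\Z/2$-linear values does not \emph{formally} imply the same for the relative L-theory without an argument at the level of that formula (and indeed the paper's proof of \cref{Prop-general} does not argue $H\Z/2$-linearity at all; it argues $T(n)$-acyclicity of the two terms of the equaliser directly, using only that $R$ is $T(n)$-acyclic). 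So the proposal points at the right mechanism but does not supply the step that makes the argument go through, which is precisely the input from \cite{HNS} that the paper defers to in \cref{Prop-general}.

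Two smaller points. The claim that $\L^\q(R)$ is a module over $\L^\s(\Z)$ is also not the module structure the paper uses: the genuine structures $\QF^{\geq m}_M$, including $\QF^\q_R=\QF^{\geq\infty}_R$, are modules over $\QF^{\geq0}_\Z$, i.e.\ over $\L^\gs(\Z)$, and the paper's argument passes from there to $H\Z$-linearity via the connectivity of $\MSO$ and \cite[Corollary 1.3.10]{CDHIII}. Similarly, your final two-out-of-three step asserts the hyperquadratic L-spectrum is a module over $\L^\s(\Z)$; this needs justification (the cleanest route is again the genuine module structure, or simply noting that $\L^\q(R)\to\L^\s(R)$ is a $T(n)$-equivalence by \cref{Theorem1} applied to $\QF^\q$ and $\QF^\s$).
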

\begin{proof}
This is a special case of \cref{Prop-general}.
\end{proof}

\begin{Rmk}
Again, \cref{Prop-TF} implies that $\L^\t(R)$ is $K(1)$-acyclic at prime 2 and $T(n)$ acyclic for $n\geq 2$ and all primes, but we expect that $\L^\t(R)$ is not 2-adically an $H\Z$-module, and not $p$-adically a $\ko$-module.
\end{Rmk}

\begin{Rmk}
In \cite{LMMT}, we show that the map $K(R) \to K(R[\tfrac{1}{p}])$ is a $K(1)$-local equivalence for all $K(1)$-acyclic ring spectra $R$, not only discrete rings (or $H\Z$-algebras) in which case it was first shown in \cite{BCM} using arithmetic techniques. Below, in \cref{Cor-invert-p}, we show that the same is true for the L-theory of connective ring spectra. It would be interesting to know whether this is true in full generality, and we plan to come back to this question in future work. 
\end{Rmk}

\section{Ring spectra}

In this section, we collect some results on telescopic localisations of the L-theory of ring spectra. The main ingredient is a formula we have learned from Yonatan Harpaz for certain relative L-theory terms, inspired by results of Weiss--Williams on the normal L-theory of the sphere spectrum \cite{WW3}, now incorporated in work with Nikolaus and Shah on real topological cyclic homology \cite{HNS}.

The following result implies \cref{Prop-TF} as well as \cref{Cor:finite-dimensional-rings}.

\begin{Thm}\label{Prop-general}
Let $R$ be a $T(n)$-acyclic $\E_1$-ring spectrum and let $\QF$ be a Poincar\'e structure on $\Perf(R)$ with underlying module with involution $M=R$. Then the map 
\[ \L^\q(R;M) \lto \L(R;\QF) \]
is a $T(n)$-local equivalence.
\end{Thm}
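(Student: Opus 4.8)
The plan is to analyze the fibre of the map $\L^\q(R;M) \to \L(R;\QF)$ using the universal property of quadratic Poincaré structures. Since $M = R$, the quadratic structure $\QF^\q_M = \QF^\q_R$ is the initial Poincaré structure on $\Perf(R)$ with the given duality $D_M$, so there is a canonical map $\QF^\q_R \to \QF$ of Poincaré structures over the fixed duality, inducing $\L^\q(R;M) \to \L(R;\QF)$. The difference between $\QF^\q_R$ and any other $\QF$ with the same underlying bilinear part is measured by the linear part, i.e.\ by the functor $\Lambda_\QF = \mathrm{fib}(\QF \to \QF^\s)$ evaluated on $R$, which is a spectrum with $C_2$-action (a "homotopy orbit" type term). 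Concretely, the standard cofibre-sequence description of Poincaré structures (as in \cite{CDHI}, the classification via $(\mathrm{B}_\QF, \Lambda_\QF)$) lets one write the relevant relative L-group contributions in terms of $\Lambda_\QF(R)$, and more precisely in terms of the value at $R$ of the linear functor; the key point is that this relative term is built, via bar-type constructions, out of $R$ and its tensor powers, hence is $T(n)$-acyclic whenever $R$ is.

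The concrete route I would take is the following. First, reduce to a statement about the cofibre of $\L^\q(R;M) \to \L(R;\QF)$, using that L-theory is additive/exact in the Poincaré structure variable for a fixed underlying stable category: a (co)fibre sequence of Poincaré structures $\QF' \to \QF \to \QF''$ yields a fibre sequence of L-spectra. Second, identify the "difference" Poincaré structure: since $\QF^\q_R$ is initial over the duality $D_R$, the cofibre of $\QF^\q_R \to \QF$ has trivial bilinear part and hence is entirely determined by a linear functor, and its L-theory is expressible via the formula attributed to Harpaz (the one advertised in the paragraph preceding the theorem, coming from \cite{HNS} / Weiss--Williams \cite{WW3}) as something assembled from $\Omega^\infty$ or Tate-type constructions on $R\otimes R$ and $R$ with $C_2$-action. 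Third, invoke that $T(n)$-localisation kills any spectrum built by finite (co)limits and geometric realisations from copies of $R$ and $R^{\otimes k}$: since $R$ is $T(n)$-acyclic and $T(n)$-acyclics form a tensor ideal, each $R^{\otimes k}$ is $T(n)$-acyclic, and $L_{T(n)}$ commutes with colimits, so the whole relative term vanishes $T(n)$-locally. Therefore the fibre of $\L^\q(R;M)\to\L(R;\QF)$ is $T(n)$-acyclic, which is exactly the claim.

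The main obstacle I anticipate is making the second step precise: writing down the exact form of the relative L-theory term (the Harpaz formula) and checking that it really is manufactured out of $R$ and its smash powers with $C_2$-equivariance, rather than involving some genuinely $C_2$-fixed-point construction like $R^{tC_2}$ applied to the unit $\S$ (which would not be $T(n)$-acyclic a priori). In fact, the danger is precisely a Tate term: if the relative spectrum involved $\S^{tC_2}$ or a fixed-point spectrum not lying in the tensor ideal generated by $R$, the argument would fail. The resolution should be that because the underlying module is $M = R$ itself (not a shift of $\S$ or an external module), all the relevant mapping spectra and norm maps are $R$-linear, so the Tate and orbit constructions appearing are of the form $(R\otimes R)_{hC_2}$, $(R\otimes R)^{hC_2}$, $(R\otimes R)^{tC_2}$ — all $R$-modules, hence $T(n)$-acyclic — and the comparison between $\QF^\q$ and $\QF$ only sees the "linear part" which is controlled by such $R$-linear data. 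Once this $R$-linearity of the relative term is established, the vanishing is immediate from the tensor-ideal property of $T(n)$-acyclic spectra and the colimit-preservation of $L_{T(n)}$.
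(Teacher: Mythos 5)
Your overall strategy matches the paper's: both compute the cofibre of $\L^\q(R;M)\to\L(R;\QF)$ via the Harpaz/\cite{HNS} formula and then argue that the resulting spectrum lies in the ideal of $T(n)$-acyclics generated by $R$-modules, hence vanishes $T(n)$-locally. You also correctly anticipate the one danger point (that some intrinsic Tate construction on $\S$ might appear and fail to be $T(n)$-acyclic) and correctly predict that it does not, because the module with involution is $M=R$ and all mapping spectra in the formula are $R$-linear. In the paper the cofibre, after reducing by filtered colimits to the case where the linear part of $\QF$ is corepresented by a compact $R$-module $X$, is an equaliser of two maps $\map_R(D_M(X),X)\to(\S^{1-\sigma}\otimes\map_R(D_M(X),X))_{hC_2}$, and one checks that $\map_R(D_M(Z),Y)$ is $T(n)$-acyclic for every compact $Z$ and every $Y$ (true for $Z=R$ since $\map_R(M,Y)$ is an $R^\op$-module, and the class of such $Z$ is closed under finite colimits); both equaliser terms are then $T(n)$-acyclic, hence so is the equaliser.

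Two points to flag, neither of which affects the viability of your strategy. First, your claimed justification that ``a (co)fibre sequence of Poincaré structures yields a fibre sequence of L-spectra'' is false: L-theory is very far from exact in the Poincaré-structure variable, and the cofibre of $\QF^\q_R\to\QF$ (as quadratic functors) has trivial bilinear part and is therefore not a Poincaré structure, so ``its L-theory'' is not defined. You do not actually use this assertion — the reduction to the cofibre is trivially valid, and the real content is the \cite{HNS} equaliser formula for that cofibre — but as written the remark is incorrect and should be removed. Second, you omit the reduction (by filtered colimits, since L-theory commutes with them) to the case where the linear part of $\QF$ is corepresented by a compact $R$-module; this is needed to put yourself in the position where the \cite{HNS} formula applies in the stated form. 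With those adjustments your argument becomes the one in the paper.
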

\begin{proof}
Since L-theory commutes with filtered colimits of Poincar\'e categories and $T(n)$-acyclic spectra are closed under colimits, it suffices to treat the case where the linear part of $\Qoppa$ is represented by a compact $R$-module $X$ -- see \cite[Section 3]{CDHI} for a general treatment of Poincar\'e structures on module categories. In this case, \cite{HNS} shows that the cofibre of the map in question is given by an equaliser of two maps
\[ \map_R(D_M(X),X) \lto (\S^{1-\sigma}\otimes \map(D_M(X),X))_{hC_2} \]
where $D_M(X) = \map_R(X,M)$. We claim that for all $Y$, the spectrum $\map_R(D_M(X),Y)$ is $T(n)$-acyclic: If $X=R$ this follows from the fact that $\map_R(M,Y)$ is an $R^\mathrm{op}$-module and hence $T(n)$-acyclic since $R$ (and hence $R^\mathrm{op}$) is $T(n)$-acyclic. In addition, the collection of $Z$ such that $\map_R(D_M(Z),Y)$ is $T(n)$-acyclic is closed under finite colimits and hence contains all perfect $R$-modules.
Therefore, both terms in the above equaliser are $T(n)$-acyclic, hence so is the equaliser itself.
\end{proof}

\begin{Rmk}
In the special case when $R$ is a $K(1)$-acyclic and connective $\E_\infty$-ring, and $\QF$ is the Tate Poincar\'e structure $\QF^\t$ (see again \cite[Example 3.2.12]{CDHI}), we can consider the maps
\[ \L^\q(\pi_0(R)) \longleftarrow \L^\q(R) \lto \L^\t(R) \]
in which the left map is an equivalence by \cite[Corollary 1.2.33 (i)]{CDHIII} and the right map is a $K(1)$-local equivalence by \cref{Prop-general}. Comparing to the same maps for $R[\tfrac{1}{p}]$, we deduce from \cref{Prop4} that the map $\L^\t(R) \to \L^\t(R[\tfrac{1}{p}])$ is a $K(1)$-local equivalence. However, it is in general not a $p$-adic equivalence for odd primes $p$: The fibre sequence of \cite{HNS} in this case reads as 
\[ \L^\q(R) \lto \L^\t(R) \lto \mathrm{Eq}\big( R \rightrightarrows R \otimes \mathrm{MTO(1)} \big) \]
so that the map $\L^\q(R) \to \L^\t(R)$ is a $p$-adic equivalence if $p$ is invertible in $R$, but not in general (e.g.\ for $R=\S$).

In particular, \cref{Prop4} is really something particular to the genuine Poincar\'e structures on discrete rings, whereas its consequence that the map $\L(R;\QF^{\geq m}_M) \to \L(R[\tfrac{1}{p}];\QF^{\geq m}_M)$ is a $K(1)$-local equivalence holds true more generally.
\end{Rmk}

\begin{Cor}\label{Cor:higher-vanishing-L}
Let $R$ be a connective and $T(n)$-acyclic $\E_1$-ring and $\QF$ a Poincar\'e structure with underlying module with involution $M$. If $n=1$, we have
\[ L_{K(1)}\L(R;\QF) \simeq \begin{cases} 0 & \text{ if } p=2 \\ \L^\s(\pi_0(R);M)^\cwedge_p & \text{ if } p \neq 2 \end{cases}\]
and if $n\geq 2$, we have that $L_{T(n)}\L(R;\QF)$ vanishes.
\end{Cor}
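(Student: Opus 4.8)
The plan is a three-step reduction, $T(n)$-locally for $n\geq 1$: first replace $\L(R;\QF)$ by the quadratic $\L$-theory $\L^\q(R;M)$ using \cref{Prop-general}, then replace $R$ by the discrete ring $\pi_0(R)$, and finally invoke \cref{Theorem1}. Since $L_{T(1)}=L_{K(1)}$, I read the hypothesis ``$R$ connective and $T(n)$-acyclic'' as ``$R$ connective and $K(1)$-acyclic'' when dealing with the $n=1$ statement.

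For the first step I would note that \cref{Prop-general}, although stated for $M=R$, holds by the same proof for an arbitrary invertible module with involution $M$. Indeed, in the notation of that proof the base case $X=R$ of the assertion ``$\map_R(D_M(X),Y)$ is $T(n)$-acyclic'' becomes the statement that $\map_R(M,Y)$ is $T(n)$-acyclic, which holds because $\map_R(M,Y)$ is a module over $\mathrm{End}_R(M)\simeq R$ (using that $M$ is invertible) and $R$ is $T(n)$-acyclic; the reduction to $X=R$ by closure under finite colimits, and the equaliser description of the cofibre taken from \cite{HNS}, are unaffected. Applied to the canonical map $\QF^\q_M\to\QF$, this shows that $\L^\q(R;M)\to\L(R;\QF)$ is a $T(n)$-local equivalence for every $n\geq 1$.

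Since $R$ is connective, \cite[Corollary 1.2.33 (i)]{CDHIII} identifies $\L^\q(R;M)$ with $\L^\q(\pi_0(R);M)$, which reduces the problem to the discrete ring $\pi_0(R)$ — automatically $T(n)$-acyclic for $n\geq 1$, since $H\pi_0(R)$ is, irrespective of $R$. Now $\L^\q(\pi_0(R);M)=\L(\pi_0(R);\QF^{\geq \infty}_M)$ is the $m=\infty$ member of the genuine family, and \cref{Theorem1} — whose statement and proof apply verbatim in this boundary case, as $\L^\q(\pi_0(R);M)$ is still a module over $\L^\gs(\Z)$ and still agrees with $\L^\s(\pi_0(R);M)$ after inverting $2$ — gives that $L_{K(1)}\L^\q(\pi_0(R);M)$ is $0$ for $p=2$ and $\L^\s(\pi_0(R);M)^\cwedge_p$ for $p\neq 2$, and that $L_{T(n)}\L^\q(\pi_0(R);M)=0$ for all $n\geq 2$ at every prime. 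Concatenating the three steps yields the corollary. I do not expect a real obstacle here: the substance is carried by the already-established \cref{Prop-general} and \cref{Theorem1} together with \cite[Corollary 1.2.33 (i)]{CDHIII}, and the only points needing a sentence of justification are the two routine extensions — of \cref{Prop-general} to general $M$ and of \cref{Theorem1} to $m=\infty$.
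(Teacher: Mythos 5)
Your proof is correct and follows the same three-step route as the paper: reduce to $\QF^\q_M$ via \cref{Prop-general}, pass to $\pi_0(R)$ via the algebraic $\pi$--$\pi$-theorem, and apply \cref{Theorem1}. You are in fact more careful than the paper itself, which cites \cref{Prop-general} directly despite its stated hypothesis $M=R$; your observation that the proof of \cref{Prop-general} goes through for general invertible $M$ (because $\map_R(M,Y)$ still carries a module structure over a $T(n)$-acyclic ring via the second $R$-action on $M$) is exactly the needed repair, and likewise the $m=\infty$ boundary case of \cref{Theorem1} poses no issue for the reasons you give.
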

\begin{proof}
By \cref{Prop-general} it suffices to prove the claim for $\QF=\QF^\q_M$. In this case, the algebraic $\pi$-$\pi$-theorem, \cite[Corollary 1.2.33 (i)]{CDHIII}, shows that $\L^\q(R;M) \to \L^\q(\pi_0(R);M)$ is an equivalence. Here, by abuse of notation we also write $M$ for the module with involution over $\pi_0(R)$ induced by $M$. The result then follows from \cref{Theorem1}.
\end{proof}

\begin{Example}
We obtain that $\L^\s(\ku)$ is $T(2)$-acyclic, and hence by Hahn's result $T(n)$-acyclic for all $n\geq 2$ \cite{Hahn}. Likewise, iterated $\L$-theory $\L^{(n)}(\Z) = \L^\s(\dots(\L^\s(\Z)))$ remains $T(2)$-acyclic for all $n\geq 2$.
\end{Example}

\begin{Rmk}
In other words, L-theory does not redshift. This was of course well-known at very low heights, as the L-theory of $\F_p$-algebras is rationally trivial. The above shows that this behaviour does not change in higher heights and for ring spectra, which was of course also to be expected.
\end{Rmk}

\begin{Cor}\label{Cor-invert-p}
Let $R$ be a $K(1)$-acyclic and connective $\E_1$-ring spectrum with Poincar\'e structure $\QF$. Then the maps
\[ \L(R;\QF) \lto \L(R[\tfrac{1}{p}];\QF) \quad \text{ and }\quad \GW(R;\QF) \lto \GW(R[\tfrac{1}{p}];\QF)\]
are $K(1)$-local equivalences.
\end{Cor}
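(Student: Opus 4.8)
The plan is to reduce the $\GW$-statement to the $\L$-statement exactly as in the proof of \cref{Cor5}, and then to prove the $\L$-statement by combining \cref{Cor:higher-vanishing-L} with the already-established discrete-ring case from \cref{Prop4} and \cref{Cor5}. For the second map, I would write down the diagram of fibre sequences
\[\begin{tikzcd}
	K(R)_{hC_2} \ar[r] \ar[d] & \GW(R;\QF) \ar[r] \ar[d] & \L(R;\QF) \ar[d] \\
	K(R[\tfrac{1}{p}])_{hC_2} \ar[r] & \GW(R[\tfrac{1}{p}];\QF) \ar[r] & \L(R[\tfrac{1}{p}];\QF)
\end{tikzcd}\]
coming from \cite[Corollary 4.4.14]{CDHII}. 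The left vertical map is a $K(1)$-local equivalence by the purity theorem for $K(1)$-local $K$-theory of connective $\E_1$-rings established in \cite{LMMT} (applied to the $K(1)$-acyclic ring $R$), together with the fact that $K(1)$-localisation commutes with the homotopy $C_2$-orbits as in the proof of \cref{Cor5}. So it suffices to treat the right vertical map, i.e.\ the first displayed map of the corollary.

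For the $\L$-statement I distinguish primes. At $p=2$, \cref{Cor:higher-vanishing-L} gives that both $\L(R;\QF)$ and $\L(R[\tfrac1p];\QF)$ are $K(1)$-acyclic, so the map is trivially a $K(1)$-local equivalence. For odd $p$, by \cref{Prop-general} it suffices to treat $\QF = \QF^\q_M$, and then by the algebraic $\pi$-$\pi$-theorem \cite[Corollary 1.2.33 (i)]{CDHIII} the square
\[\begin{tikzcd}
	\L^\q(R;M) \ar[r] \ar[d] & \L^\q(R[\tfrac{1}{p}];M) \ar[d] \\
	\L^\q(\pi_0(R);M) \ar[r] & \L^\q(\pi_0(R)[\tfrac{1}{p}];M)
\end{tikzcd}\]
has both vertical maps equivalences — note $\pi_0(R[\tfrac1p]) = \pi_0(R)[\tfrac1p]$. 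Hence the top map has the same fibre as the bottom map, which is the map of $\L^\q$-spectra of the discrete ring $\pi_0(R)$ obtained by inverting $p$. That bottom map is a $p$-adic equivalence by \cref{Prop4} (using once more that $\L^\q = \L(-;\QF^\q_M)$ sits over $\L(-;\QF^{\geq m}_M)$, or directly that \cref{Prop4} and its proof apply to $\QF^\q_M$ since inverting $2$ kills the difference between the genuine structures), hence a $K(1)$-local equivalence. Therefore the top map is a $K(1)$-local equivalence as well, and this proves the first displayed map of the corollary; feeding this back into the diagram of fibre sequences above gives the second.

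The main point to get right is the very last reduction: \cref{Prop4} as stated is about the genuine structures $\QF^{\geq m}_M$ on a discrete ring, so one must be slightly careful to have the statement available for $\QF^\q_M$. Since $\QF^\q_M = \QF^{\geq \infty}_M$ appears as the initial object of the tower in the Setup, the same localisation–completion argument of \cite[Proposition 2.3.6]{CDHIII} combined with the vanishing of $\L(\Z^\cwedge_p;\QF^{\geq 0}_\Z)[\tfrac12]$ applies verbatim — the module structure over $\L^\gs(\Z^\cwedge_p)$ and the computation of $\pi_0$ are insensitive to which genuine structure (including the quadratic one) one uses after inverting $2$, by \cite[Proposition 3.1.14]{CDHIII}. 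Beyond that, everything is a formal diagram chase together with the inputs already in the excerpt, so I do not anticipate a genuine obstacle, only bookkeeping.
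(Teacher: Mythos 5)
Your proof is correct and takes essentially the same route as the paper: reduce the GW statement to the L statement via the fundamental fibre sequence and the $K(1)$-local purity of $K$-theory from \cite{LMMT}, and for L-theory use \cref{Prop-general} together with the algebraic $\pi$-$\pi$-theorem to reduce to the discrete ring $\pi_0(R)$, where \cref{Prop4} (odd $p$) and \cref{Theorem1} (at $p=2$) finish the job. The only minor cosmetic difference is that you split by prime before performing the reduction to $\pi_0(R)$, while the paper reduces first and splits afterwards; and your concern about applying \cref{Prop4} to $\QF^\q_M$ is addressed already by the convention $\QF^\q_M = \QF^{\geq\infty}_M$ in the Setup, so the fallback via \cite[Proposition 3.1.14]{CDHIII} is not really needed, though it is a correct alternative.
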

\begin{proof}
We have already seen the argument for the case of L-theory: By \cref{Prop-general} and the $\pi$-$\pi$-theorem \cite[Corollary 1.2.24]{CDHIII}, we may assume that $R$ is discrete. In this case the result was obtained in \cref{Prop4} for odd primes and at prime 2 all terms vanish $K(1)$-locally by \cref{Theorem1}. To deduce the case for GW, we again consider the diagram
\[\begin{tikzcd}
	K(R)_{hC_2} \ar[r] \ar[d] & \GW(R;\QF) \ar[r] \ar[d] & \L(R;\QF) \ar[d] \\
	K(R[\tfrac{1}{p}])_{hC_2} \ar[r] & \GW(R[\tfrac{1}{p}];\QF) \ar[r] & \L(R[\tfrac{1}{p}];\QF)
\end{tikzcd}\]
We have just argued that the right vertical map is a $K(1)$-equivalence, and \cite{LMMT} shows that the same is true for the left vertical map.
\end{proof}

\begin{Cor}\label{Cor-E-infty-connective}
Let $R$ be a connective and $T(n)$-acyclic ring spectrum with Poincar\'e structure $\QF$ and $n\geq 2$ or $n=1$ and $p=2$. Then of the following maps
\[ K(R)_{hC_2} \lto \GW(R;\QF) \lto K(R)^{hC_2} \, ,\]
the first one is a $T(m)$-equivalence for $m\geq n$ (with $p=2$ if $m=n=1$). If $(\Perf(R),\QF)$ is an associative algebra in $\mathrm{Cat}_\infty^\mathrm{p}$, then all of the above maps are $T(m)$-equivalences. If in addition the ring spectrum $R$ is $\E_\infty$, then all terms vanish $T(m)$-locally for $m \geq n+2$. 
\end{Cor}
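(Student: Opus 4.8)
The plan is to reduce all three assertions to the vanishing theorem \cref{Cor:higher-vanishing-L} for $\L$-theory, together with standard facts about the algebraic $K$-theory $K(R)$ of a connective ring spectrum and about its $C_2$-Tate construction; throughout, the $C_2$-action on $K(R)$ is the one induced by the duality of $\QF$. For the first map one starts from Hahn's theorem \cite{Hahn}, by which a connective $T(n)$-acyclic ring spectrum is already $T(m)$-acyclic for all $m\geq n$; hence \cref{Cor:higher-vanishing-L} applies with $n$ replaced by any such $m$ and shows that $\L(R;\QF)$ is $T(m)$-acyclic for all $m\geq n$, the only proviso being that $p=2$ is needed when $m=n=1$ --- which is exactly the case excluded in the statement. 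Applying $L_{T(m)}$ to the fundamental fibre sequence
\[ K(R)_{hC_2}\lto \GW(R;\QF)\lto \L(R;\QF) \]
of \cite[Corollary~4.4.14]{CDHII} then shows that $K(R)_{hC_2}\to \GW(R;\QF)$ is a $T(m)$-equivalence in the stated range.

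For the second map one also uses the forgetful map $\GW(R;\QF)\to K(R)^{hC_2}$. Its composite with the hyperbolic map is the norm map of the duality action, so that the fundamental fibre sequence maps, by the identity on its first term, to the norm fibre sequence $K(R)_{hC_2}\to K(R)^{hC_2}\to K(R)^{tC_2}$; passing to horizontal cofibres yields $\mathrm{cofib}(\GW(R;\QF)\to K(R)^{hC_2})\simeq \mathrm{cofib}(\L(R;\QF)\to K(R)^{tC_2})$. As $\L(R;\QF)$ is already $T(m)$-acyclic, the second map is a $T(m)$-equivalence as soon as $K(R)^{tC_2}$ is $T(m)$-acyclic. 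For odd $p$ this is automatic: inverting $2$ annihilates every $C_2$-Tate construction, so $K(R)^{tC_2}$ is $p$-locally trivial and hence $T(m)$-acyclic for all $m\geq 1$. For $p=2$ one instead uses that $K(R)$ has bounded telescopic height --- by the results on the chromatic behaviour of algebraic $K$-theory \cite{Mitchell,CMNN,LMMT} the connective ring spectrum $K(R)$ is $T(m)$-acyclic for all $m\geq n+1$ --- together with the fact that the $C_2$-Tate construction lowers telescopic height by one: since $L_{T(m)}(K(R)^{tC_2})$ depends only on the chromatic layers of $K(R)$ at heights $m$ and $m+1$, and a $T(m)$-local spectrum has trivial $C_2$-Tate construction \cite{Kuhn,CSY,CSY2}, one concludes that $K(R)^{tC_2}$ is $T(m)$-acyclic in the full range $m\geq n$. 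The associative algebra hypothesis is what endows $K(R)$ with a compatible $C_2$-action in the first place (equivalently, places it in the realm of real algebraic $K$-theory), which is the structure on which this argument rests.

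For the last assertion, if $R$ is $\E_\infty$ then $(\Perf(R),\QF)$ is an $\E_\infty$-algebra in $\Cat_\infty^\mathrm{p}$, so the previous step applies and exhibits all three terms as $T(m)$-equivalent to $K(R)_{hC_2}$ for $m\geq n$; it therefore suffices to see that $K(R)$ vanishes $T(m)$-locally for $m\geq n+2$, which is the upper bound on the redshift of algebraic $K$-theory of connective $\E_\infty$-rings \cite{CM,CMNN,LMMT}. The $T(m)$-acyclicity of the other two terms in this range then follows from the two fibre sequences above, using in addition that $\L(R;\QF)$ is $T(m)$-acyclic for $m\geq n$ and that $K(R)^{tC_2}$ is $T(m)$-acyclic whenever $K(R)$ is both $T(m)$- and $T(m+1)$-acyclic.

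The step I expect to be the main obstacle is the chromatic bookkeeping in the second and third assertions: one must reconcile the (only approximately controlled) upper bound on the redshift of $K(R)$ with the blueshift of the $C_2$-Tate construction so that the numerical ranges come out exactly as stated, and pin down precisely why the associative algebra structure suffices at the bottom of the range in the second assertion while only the cruder bound $m\geq n+2$ is claimed in the third. A cleaner and more uniform route, if available, would be to identify $\mathrm{cofib}(\L(R;\QF)\to K(R)^{tC_2})$ --- equivalently $\mathrm{cofib}(\GW(R;\QF)\to K(R)^{hC_2})$ --- with a shift of the $\L$-theory of another Poincar\'e structure on $\Perf(R)$, to which \cref{Cor:higher-vanishing-L} would apply directly, so that no $K$-theoretic redshift or blueshift input would be needed at all; establishing or ruling out such an identification is the first thing I would try.
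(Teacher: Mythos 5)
Your handling of the second map (and consequently of the final vanishing claim) diverges from the paper and contains genuine gaps. The paper's argument for the second map is a one-liner: once $(\Perf(R),\QF)$ is an associative algebra in $\Cat_\infty^\mathrm{p}$, the cofibre $K(R)^{tC_2}$ of the composite $K(R)_{hC_2}\to K(R)^{hC_2}$ is a \emph{module over} $\L(R;\QF)$, which has already been shown to be $T(m)$-acyclic; hence $K(R)^{tC_2}$ is $T(m)$-acyclic and the second map is a $T(m)$-equivalence, with no redshift or blueshift input at all. This is exactly the ``cleaner and more uniform route'' you were hoping exists, and it is also the actual role of the associative algebra hypothesis. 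Your claim that the algebra hypothesis is ``what endows $K(R)$ with a compatible $C_2$-action'' is a misconception: the $C_2$-action on $K(R)$ and the fundamental fibre sequence exist for \emph{any} Poincar\'e structure $\QF$ on $\Perf(R)$, cf.\ \cite[Corollary 4.4.14]{CDHII}; what the algebra hypothesis buys is precisely the $\L(R;\QF)$-module structure on $K(R)^{tC_2}$.

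Your substitute argument has two concrete problems. First, ``inverting $2$ annihilates every $C_2$-Tate construction'' is false: $\S^{tC_2}\simeq\S^{\cwedge}_2$ (Lin's theorem), and $\S^{\cwedge}_2[\tfrac12]$ has $\pi_0\cong\Q_2\neq 0$, since homotopy fixed points do not commute with the filtered colimit involved in inverting $2$. The correct statement in this direction is that $K(R)^{tC_2}$ is, via lax monoidality of the Tate construction, a module over $\S^{tC_2}\simeq\S^{\cwedge}_2$, which is $T(m)$-acyclic at odd primes for $m\geq 1$; your conclusion for odd $p$ is salvageable, but not for the reason you gave. Second, your $p=2$ argument appeals to the redshift bound ``$K(R)$ is $T(m)$-acyclic for all $m\geq n+1$,'' but to derive that from \cite{LMMT,CMNN} one first needs $R$ itself to be $T(m)$- and $T(m-1)$-acyclic, which is exactly where Hahn's theorem \cite{Hahn} enters --- and Hahn's theorem is specifically about connective $\E_\infty$-rings. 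In the middle assertion of the corollary $R$ is only assumed to underlie an associative Poincar\'e algebra, so this step is not justified. The auxiliary claim that ``a $T(m)$-local spectrum has trivial $C_2$-Tate construction'' is also not correct as stated; blueshift results for Tate constructions are much more delicate than that. None of these difficulties arise in the paper's module-theoretic argument, which is uniform in $p$ and makes no appeal to redshift or blueshift for $K(R)$. (Your treatment of the first map and the last assertion is otherwise in line with the paper's, which likewise invokes Hahn's theorem in the $\E_\infty$ case and the purity equivalence $L_{T(m)}K(A)\simeq L_{T(m)}K(L_{T(m)\oplus T(m-1)}A)$ for the $m\geq n+2$ vanishing.)
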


We briefly comment on the assumptions in \cref{Cor-E-infty-connective}. Since the forgetful functor $\Cat_\infty^\mathrm{p} \to \Cat_\infty^\mathrm{ex}$ is canonically symmetric monoidal \cite[Theorem 5.2.7 (ii)]{CDHI}, $(\Perf(R),\QF)$ being an algebra implies that $\Perf(R)$ is a stably monoidal $\infty$-category, i.e.\ a monoidal and stable $\infty$-category whose tensor product commutes with finite colimits in each variable. If the unit is required to be $R$, then $R$ carries an induced $\E_2$-structure, and conversely any $\E_2$-structure on $R$ gives rise to a stably monoidal structure on $\Perf(R)$ with unit $R$. In addition, for $(\Perf(R),\QF)$ to be an algebra, one needs the Poincar\'e structure $\QF \colon \Perf(R)^\op \to \Sp$ is lax monoidal in a way that the induced lax monoidal structure on the duality $D\colon \Perf(R)^\op \to \Perf(R)$ is (strong) monoidal, see \cite[Corollary 5.3.18]{CDHI}.

\begin{proof}[Proof of \cref{Cor-E-infty-connective}]
Let $m\geq 2$. The first map is a $T(m)$-equivalence since its cofibre is $\L(R;\QF)$ which is $T(m)$-acyclic by \cref{Cor:higher-vanishing-L}. The cofibre of the composite is $K(R)^{tC_2}$ which, under the assumption that $(\Perf(R),\QF)$ is monoidal, is a module over $\L(R;\QF)$ and is hence also $T(m)$-acyclic. It remains to shown the  $T(m)$-local vanishing for $m\geq n+2$. By the first part, it suffices to show that the first term vanishes $T(m)$-locally. By the assumption that $R$ is $\E_\infty$, we find deduce from Hahn's result that $R$ is $T(m)$-acyclic for all $m \geq n$. The claim then follows from the combined redshift result of \cite{LMMT,CMNN} which given an equivalence 
\[ L_{T(m)}K(A) \simeq L_{T(m)} K(L_{T(m)\oplus T(m-1)}A).\]
\end{proof}

\begin{Rmk}
\cref{Cor-E-infty-connective} can have implications for nonconnective rings $R$ as well: For any $\E_2$-ring spectrum $R$, there is an $\E_2$-map $\tau_{\geq 0}R \to R$. If the monoidal functor $\Perf(\tau_{\geq 0}R) \to \Perf(R)$ of stable $\infty$-categories refines to a monoidal functor of Poincar\'e categories, we obtain that $\L(R)$ and $K(R)^{tC_2}$ are modules over $\L(\tau_{\geq 0} R)$, so we can use the vanishing results for $\L(\tau_{\geq 0} R)$ obtained by \cref{Cor-E-infty-connective} to obtain results for $\L(R)$ and $K(R)^{tC_2}$. However, the condition that $\Perf(\tau_{\geq 0} R) \to \Perf(R)$ refines to a monoidal functor of Poincar\'e categories is not always fulfilled. In fact, it is not even automatic that $\Perf(\tau_{\geq0}R) \to \Perf(R)$ refines to a Poincar\'e functor (disregarding multiplicative structures).
\end{Rmk}

\begin{Rmk}
Let $\KGW$ be the Karoubi localisation of $\GW$ as in \cite{CDHIV}. This is an invariant which sits in a natural fibre sequence
\[ \GW(\C,\QF) \lto \KGW(\C,\QF) \lto (\tau_{<0}\mathbb{K}(\C))^{hC_2}\]
where $\mathbb{K}$ denotes non-connective $K$-theory. We therefore see that the map $\GW(\C,\QF) \to \KGW(\C,\QF)$ is an equivalence on connective covers and hence a $T(n)$-local equivalence for all $n\geq 1$. Therefore, any result about telescopically localised $\GW$ applies equally to $\KGW$.
\end{Rmk}

\begin{Rmk}
Likewise, let $\KL$ be the Karoubi localisation of $\L$ as in \cite{CDHIV}. This is an invariant which sits in a natural fibre sequence
\[ \L(\C,\QF) \lto \KL(\C,\QF) \lto (\tau_{<0}\mathbb{K}(\C,\QF))^{tC_2},\]
and the map $\L \to \KL$ is a symmetric monoidal transformation between lax symmetric monoidal functors $\mathrm{Cat}_\infty^\mathrm{p} \to \mathrm{Sp}$.
In addition, we observe that 
\[ \L(\C,\QF) \lto \KL(\C,\QF) \]
is an equivalence after inverting 2, hence in particular a $p$-adic equivalence for odd primes $p$: Its cofibre is
\[ (\tau_{<0}\mathbb{K}(\C,\QF))^{tC_2} \simeq \colim_{n\to \infty} (\tau_{[-1,-n]}\mathbb{K})^{tC_2} \]
by \cite[Lemma I.2.6]{NS}. Since $\Z^{tC_2}$ is $\S[\tfrac{1}{2}]$-acyclic for $n\geq 1$, we deduce inductively that the terms appearing in the colimit on the right hand side are $\S[\tfrac{1}{2}]$-acyclic. Hence, the colimit is also $\S[\tfrac{1}{2}]$-acyclic as claimed.

We deduce that $\L(\C,\QF) \to \KL(\C,\QF)$ is a $T(n)$-local equivalence for $n\geq 1$ at odd primes, and using the monoidality of the transformation $\L \to \KL$, it also follows that all $K(1)$-local vanishing results at prime 2 in this note hold for $\KL$ in place of $\L$.
\end{Rmk}

\begin{Acknowledgements}
I want to thank Thomas Nikolaus for helpful suggestions on an early version of this document and Yonatan Harpaz for explaining to me the formula for relative L-theory.
\end{Acknowledgements}

\bibliographystyle{amsalpha}
\bibliography{remarks-on-GW}

\newcommand{\etalchar}[1]{$^{#1}$}
\providecommand{\bysame}{\leavevmode\hbox to3em{\hrulefill}\thinspace}
\providecommand{\MR}{\relax\ifhmode\unskip\space\fi MR }
\providecommand{\MRhref}[2]{%
  \href{http://www.ams.org/mathscinet-getitem?mr=#1}{#2}
}
\providecommand{\href}[2]{#2}
\begin{thebibliography}{CDH{\etalchar{+}}20b}

\bibitem[AKQ20]{AKQ}
G.~Angelini-Knoll and J.~D. Quigley, \emph{Chromatic complexitity of the
  algebraic {$K$}-theory of $y(n)$},
  \href{https://arxiv.org/abs/1908.09164}{arXiv:1908.09164}, 2020.

\bibitem[AKS20]{AKS}
G.~Angelini-Knoll and A.~Salch, \emph{Commuting unbounded homotopy limits with
  {M}orava {K}-theory},
  \href{https://arxiv.org/abs/2003.03510}{arXiv:2003.03510}, 2020.

\bibitem[Bar20]{Barthel}
T.~Barthel, \emph{A short introduction to the telescope and chromatic splitting
  conjectures}, Bousfield classes and {O}hkawa's theorem, Springer Proc. Math.
  Stat., vol. 309, Springer, Singapore, [2020] \copyright 2020, pp.~261--273.

\bibitem[BCM20]{BCM}
B.~Bhatt, D.~Clausen, and A.~Mathew, \emph{Remarks on {$K (1)$}-local
  {$K$}-theory}, Selecta Math. (N.S.) \textbf{26} (2020), no.~3, Paper No. 39,
  16.

\bibitem[CDH{\etalchar{+}}20a]{CDHI}
B.~Calm{\'e}s, E.~Dotto, Y.~Harpaz, F.~Hebestreit, M.~Land, K.~Moi, D.~Nardin,
  T.~Nikolaus, and W.~Steimle, \emph{Hermitian {K}-theory for stable
  $\infty$-categories {I}: {F}oundations}, arXiv:2009.07223v3, accepted for
  publication at Selecta Mathematica (2020).

\bibitem[CDH{\etalchar{+}}20b]{CDHII}
\bysame, \emph{Hermitian {K}-theory for stable $\infty$-categories {II}:
  {Cobordism categories and additivity}}, arXiv:2009.07224v3 (2020).

\bibitem[CDH{\etalchar{+}}20c]{CDHIII}
\bysame, \emph{Hermitian {K}-theory for stable $\infty$-categories {III}:
  {Grothendieck--Witt groups of rings}}, arXiv:2009.07225v3 (2020).

\bibitem[CDH{\etalchar{+}}22]{CDHIV}
\bysame, \emph{Hermitian {K}-theory for stable $\infty$-categories {IV}:
  {Poincar\'e motives}}, in preparation (2022).

\bibitem[CM21]{CM}
D.~Clausen and A.~Mathew, \emph{Hyperdescent and \'{e}tale {$K$}-theory},
  Invent. Math. \textbf{225} (2021), no.~3, 981--1076.

\bibitem[CMNN20]{CMNN}
D.~Clausen, A.~Mathew, N.~Naumann, and J.~Noel, \emph{Descent and vanishing in
  chromatic algebraic ${K}$-theory via group actions},
  \href{https://arxiv.org/abs/2011.08233}{arXiv:2011.08233}, 2020.

\bibitem[CSY20]{CSY}
A.~Carmeli, T.~M. Schlank, and L.~Yanovski, \emph{{Ambidexterity in Chromatic
  Homotopy Theory}}, arXiv:1811.02057v3, accepted for publication at Invent.
  Math. (2020).

\bibitem[CSY21]{CSY2}
S.~Carmeli, T.~M. Schlank, and L.~Yanovski, \emph{Ambidexterity and height},
  Adv. Math. \textbf{385} (2021), Paper No. 107763, 90.

\bibitem[DO19]{DO}
E.~Dotto and C.~Ogle, \emph{{$K$}-theory of {H}ermitian {M}ackey functors, real
  traces, and assembly}, Ann. K-Theory \textbf{4} (2019), no.~2, 243--316.

\bibitem[GRW17]{GRW2}
S.~Galatius and O.~Randal-Williams, \emph{Homological stability for moduli
  spaces of high dimensional manifolds. {II}}, Ann. of Math. (2) \textbf{186}
  (2017), no.~1, 127--204.

\bibitem[GRW18]{GRW1}
\bysame, \emph{Homological stability for moduli spaces of high dimensional
  manifolds. {I}}, J. Amer. Math. Soc. \textbf{31} (2018), no.~1, 215--264.

\bibitem[Hah16]{Hahn}
J.~Hahn, \emph{On the {B}ousfield classes of ${H_\infty}$-ring spectra},
  \href{https://arxiv.org/abs/1612.04386}{arXiv:1612.04386}, 2016.

\bibitem[HLN21]{HLN}
F.~Hebestreit, M.~Land, and T.~Nikolaus, \emph{On the homotopy type of
  {L}-spectra of the integers}, J. Topol. \textbf{14} (2021), no.~1, 183--214.

\bibitem[HNS22]{HNS}
Y.~Harpaz, T.~Nikolaus, and J.~Shah, \emph{{Real topological cyclic homology
  and normal L-theory}}, in preparation, 2022.

\bibitem[HS21]{HS}
F.~Hebestreit and W.~Steimle, \emph{Stable moduli spaces of hermitian forms},
  arXiv:2103.13911v2 (2021).

\bibitem[HW21]{HW}
J.~Hahn and D.~Wilson, \emph{Redshift and multiplication for truncated
  {Brown-Peterson} spectra},
  \href{https://arxiv.org/abs/2012.00864}{arXiv:2012.00864}, 2021.

\bibitem[Kuh08]{Kuhn}
N.~J. Kuhn, \emph{A guide to telescopic functors}, Homology Homotopy Appl.
  \textbf{10} (2008), no.~3, 291--319.

\bibitem[LM13]{LMcC}
G.~Laures and J.~E. McClure, \emph{Commutativity properties of {Q}uinn
  spectra}, arXiv preprint 1304.4759 (2013).

\bibitem[LMMT20]{LMMT}
M.~Land, A.~Mathew, L.~Meier, and G.~Tamme, \emph{Purity in chromatically
  localized algebraic {$K$}-theory}, arXiv:2001.10425 (2020).

\bibitem[LN18]{LN}
M.~Land and T.~Nikolaus, \emph{On the relation between {$K$}- and {$L$}-theory
  of {$C^*$}-algebras}, Math. Ann. \textbf{371} (2018), no.~1-2, 517--563.

\bibitem[Mat21]{Mathew}
A.~Mathew, \emph{On {$K(1)$}-local {TR}}, Compos. Math. \textbf{157} (2021),
  no.~5, 1079--1119.

\bibitem[Mit90]{Mitchell}
S.~A. Mitchell, \emph{The {M}orava {$K$}-theory of algebraic {$K$}-theory
  spectra}, $K$-Theory \textbf{3} (1990), no.~6, 607--626.

\bibitem[NS18]{NS}
T.~Nikolaus and P.~Scholze, \emph{On topological cyclic homology}, Acta Math.
  \textbf{221} (2018), no.~2, 203--409.

\bibitem[Ran80a]{Ranicki-FoundationsI}
A.~A. Ranicki, \emph{The algebraic theory of surgery. {I}. {F}oundations},
  Proc. London Math. Soc. (3) \textbf{40} (1980), no.~1, 87--192.

\bibitem[Ran80b]{Ranicki-FoundationsII}
\bysame, \emph{The algebraic theory of surgery. {II}. {A}pplications to
  topology}, Proc. London Math. Soc. (3) \textbf{40} (1980), no.~2, 193--283.

\bibitem[Ran81]{Ranicki-yellow-book}
\bysame, \emph{Exact sequences in the algebraic theory of surgery},
  Mathematical Notes, vol.~26, Princeton University Press, Princeton, N.J.;
  University of Tokyo Press, Tokyo, 1981.

\bibitem[Sch19]{Schlichting}
M.~Schlichting, \emph{{Higher K-theory of forms I. From rings to exact
  categories}}, J. Inst. Math. Jussieu, published online (2019).

\bibitem[TW79]{TW}
L.~Taylor and B.~Williams, \emph{Surgery spaces: formulae and structure},
  Algebraic topology, {W}aterloo, 1978 ({P}roc. {C}onf., {U}niv. {W}aterloo,
  {W}aterloo, {O}nt., 1978), Lecture Notes in Math., vol. 741, Springer,
  Berlin, 1979, pp.~170--195.

\bibitem[WW14]{WW3}
M.~S. Weiss and B.~E. Williams, \emph{Automorphisms of manifolds and algebraic
  {$K$}-theory: {P}art {III}}, Mem. Amer. Math. Soc. \textbf{231} (2014),
  no.~1084, vi+110.

\bibitem[Yua21]{Yuan}
A.~Yuan, \emph{Examples of chromatic redshift in algebraic ${K}$-theory},
  arXiv:2111.10837 (2021).

\end{thebibliography}

\end{document}